\documentclass{amsart}

\usepackage{amsmath,amssymb}
\usepackage{amsthm}
\usepackage{indentfirst}
\usepackage{mathrsfs}
\usepackage{graphicx}
\usepackage{xcolor}
\usepackage{fourier}
\usepackage{microtype}
\usepackage[margin=1.5in]{geometry}
\usepackage{algorithm}
\usepackage{algorithmic}
\usepackage{booktabs}
\floatstyle{ruled}
\restylefloat{algorithm}

\usepackage{todonotes}
\usepackage{hyperref}

\theoremstyle{plain}
\newtheorem{theorem}{Theorem}
\newtheorem{lemma}[theorem]{Lemma}      

\newtheorem{prop}[theorem]{Proposition}

\theoremstyle{definition}

\theoremstyle{remark}
\newtheorem{remark}[theorem]{Remark}    

\newcommand{\ud}{\,\mathrm{d}}

\newcommand{\RR}{\mathbb{R}}

\newcommand{\EE}{\mathbb{E}}

\IfFileExists{mathabx.sty}%
  {\DeclareFontFamily{U}{mathx}{\hyphenchar\font45}%
   \DeclareFontShape{U}{mathx}{m}{n}{<->mathx10}{}%
   \DeclareSymbolFont{mathx}{U}{mathx}{m}{n}%
   \DeclareFontSubstitution{U}{mathx}{m}{n}%
   \DeclareMathAccent{\widebar}{0}{mathx}{"73}%
}{%
  \PackageWarning{mathabx}{%
    Package mathabx not available, therefore\MessageBreak substituting
    widebar with overline\MessageBreak }%
  \newcommand{\widebar}[1]{\overline{#1}}%
}

\newcommand{\abs}[1]{\lvert#1\rvert}

\usepackage{enumitem}

\newcommand{\R}{\RR}
\newcommand{\E}{\EE}

\newcommand{\dd}{\ud}

\newcommand{\ip}[2]{\langle #1,#2\rangle}
\newcommand{\pos}[1]{\left(#1\right)_+}

\DeclareMathOperator{\Law}{Law}

\newcommand{\bps}{{\rm BPS}}
\newcommand{\zz}{{\rm ZZ}}

\title[Windowed thinning for BPS and Zigzag]
      {Windowed thinning and query complexity for the bouncy particle and Zigzag samplers}
\thanks{This work is supported in part by the National Science Foundation under grant DMS-2309378. We thank Sinho Chewi and Lihan Wang for helpful discussions.}

\author{Jianfeng Lu}
\address{Department of Mathematics, Department of Physics, and Department of Chemistry, Duke University}
\email{jianfeng@math.duke.edu}

\author{Yinchen Luo}
\address{Department of Mathematics, Duke University}
\email{yinchen.luo@duke.edu}
\date{September 1, 2026}

\subjclass[2020]{65C05, 60J25, 65C40, 65Y20}
\keywords{Piecewise deterministic Markov process, bouncy particle sampler,
Zigzag process, Poisson thinning, query complexity, hypocoercivity}

\begin{document}

\begin{abstract}
Let $\mu(\dd x)\propto e^{-U(x)}\dd x$ on $\R^d$, where $U$ is $m$-strongly convex and $L$-smooth, and denote by $\kappa=L/m$ the condition number.  We consider windowed thinning, an exact simulation method for the bouncy particle sampler and the coordinate Zigzag process.  The method divides a trajectory into deterministic windows and uses a gradient evaluation at the beginning of each window to construct a tractable local envelope for the event rate.  Combining this construction with quantitative mixing estimates and finite-time bounds on the expected numbers of bounces and flips yields query complexity guarantees from a Gaussian cold start.  For total-variation error $\varepsilon$, the expected query counts are $O(\kappa^{1/2}d\,(d\log\kappa+\log\frac1\varepsilon))$ gradient queries for the bouncy particle sampler and $O(\kappa d^{1/4}(d\log\kappa+\log\frac1\varepsilon))$ full-gradient equivalents for Zigzag, where $d$ coordinate-partial queries count as one equivalent.  
\end{abstract}

\maketitle

\section{Introduction}\label{sec:intro}

The bouncy particle sampler (BPS) and the Zigzag sampler offer an event-driven geometric approach to sampling.  A particle follows a simple deterministic path and changes its velocity only at random event times.  If those event times are generated exactly, the resulting continuous-time trajectory incurs no time-discretization error.  This attractive feature leaves a less visible computational question: how many evaluations of the target gradient are needed to run the process until it is close to equilibrium?  Answering this question requires controlling both the time to convergence and the cost of simulating the trajectory over that time.  Our goal is to provide such a query complexity guarantee from an explicit Gaussian cold start.

We consider the log-concave target measure
\[
        \mu(\dd x)= Z^{-1}e^{-U(x)}\dd x,
        \qquad x\in\R^d,
\]
where $Z$ is the normalizing constant and $U\in C^2(\R^d)$ is
$m$-strongly convex and $L$-smooth:
\begin{equation}\label{eq:hessian-bounds}
        m I_d \preceq \nabla^2 U(x) \preceq L I_d,
        \qquad x\in\R^d.
\end{equation}
We assume that $m$, $L$, and the minimizer $x_\star$ of $U$ are available,
together with oracle access to either the full gradient $\nabla U(x)$ or an
individual coordinate derivative $\partial_iU(x)$.

Within this setting, we focus on two samplers based on piecewise deterministic
Markov processes (PDMPs): BPS \cite{BouchardCoteVollmerDoucet} and Zigzag
\cite{BierkensFearnheadRoberts}.  BPS reverses the component of its velocity
in the local gradient direction, whereas Zigzag flips a single velocity
coordinate at each event.
Both processes evolve on
$\R^d\times\R^d$ and preserve the product measure
\[
        \rho_\infty(\dd x\dd v)=\mu(\dd x)\varphi(\dd v),
        \qquad \varphi:=N(0,I_d).
\]  
See Section~\ref{sec:prelim} for a more detailed description.
We initialize them from an explicit Gaussian distribution centered at
$x_\star$, \textit{i.e.,} from a cold start.

Turning either continuous-time process into an exact algorithm requires
generating its bounce or flip times.  Their event rates depend on $\nabla U$
along the evolving trajectory and cannot be evaluated continuously in the
oracle model.  Poisson thinning \cite{LewisShedler1979} replaces the true
event rate by a tractable upper envelope: candidate events are proposed from
the envelope and accepted with probability equal to the ratio of the true
rate to the envelope.  A valid envelope preserves exactness, but a loose one can
generate many rejected proposals and therefore many unnecessary queries.

Our main algorithmic contribution, \emph{windowed thinning}, resolves this
tradeoff locally.  We divide the simulation horizon into deterministic
windows and evaluate the gradient at the current position at the beginning
of each window.  The $L$-Lipschitz continuity of $\nabla U$ then supplies a local envelope for
the event rate in terms of the cumulative distance travelled from this anchor.
Short windows keep the envelope tight but require frequent anchor queries;
long windows require fewer anchors but generate more rejected proposals.
Balancing these two query counts leads to the window lengths used in our
bounds.  Combined with quantitative
mixing estimates, windowed thinning gives the following expected query counts:
\begin{align*}
\text{BPS:}\quad&
        O\!\left(\kappa^{1/2}d
        \left(d\log\kappa+\log\tfrac1\varepsilon\right)\right),\\
\text{Zigzag:}\quad&
        O\!\left(\kappa d^{1/4}
        \left(d\log\kappa+\log\tfrac1\varepsilon\right)\right).
\end{align*}
Here $\kappa:=L/m$ is the condition number and $\varepsilon$
denotes the desired total-variation accuracy.
The BPS bound counts full-gradient queries, while the Zigzag bound is stated in
full-gradient equivalents, with $d$ coordinate-partial queries counted as one
equivalent.  Both algorithms simulate their underlying continuous-time
processes exactly; the accuracy parameter only determines the simulation
horizon.

Table~\ref{tab:complexity-comparison} compares our bounds with those of other
high-accuracy samplers whose dependence on the target accuracy is
polylogarithmic.  All entries include the cost of starting from a feasible
cold distribution; for the Metropolis-adjusted Langevin algorithm (MALA),
this includes an algorithmic warm-start phase.  The oracle and guarantee conventions are recorded in the table caption. Compared with MALA and first-order rejection sampling (FORS), while our bound for BPS has a smaller $\kappa$ dependence, its dimension dependence is much worse. We also note that since Zigzag is a coordinate algorithm, it is possible to reduce the complexity under additional structural assumptions, such as assuming coordinate Lipschitz conditions with better constants; we do not pursue those in this work.  

\begin{table}[H]
\caption{Cold-start query complexities of high-accuracy samplers for
$m$-strongly convex, $L$-smooth targets, stated for total-variation error at
most $\varepsilon$.  All entries include a feasible initialization.  The MALA
entry counts first-order queries returning both $U$ and $\nabla U$; the FORS
and PDMP entries count expected gradient queries, with Zigzag counts expressed
in full-gradient equivalents.  The notation $\widetilde O$ suppresses
logarithmic factors in $d$, $\kappa$, and $\varepsilon^{-1}$.}
\label{tab:complexity-comparison}
\small
\centering
\begin{tabular}{@{}p{0.52\textwidth}p{0.25\textwidth}@{}}
\toprule
Method & Oracle-query complexity \\
\midrule
MALA with algorithmic warm start \cite{AltschulerChewi2024}
 & $\widetilde O(\kappa d^{1/2})$ \\
Proximal sampler with FORS \cite{ChenChewiDaskalakisRakhlin2026}
 & $\widetilde O(\kappa d^{1/2})$\\
Windowed-thinning BPS, this work
 & $\widetilde O(\kappa^{1/2}d^2)$ \\
Windowed-thinning Zigzag, this work
 & $\widetilde O(\kappa d^{5/4})$ \\
\bottomrule
\end{tabular}
\end{table}

\medskip

To establish these query counts, we analyze convergence and simulation
separately.  Quantitative $\chi^2$-contraction estimates, recalled in
Section~\ref{ssec:LW}, first determine how long the process must run.  Over
this horizon, the thinning analysis counts the anchor evaluations and
proposed events.

The main difficulty is to control the event count directly from the nonstationary Gaussian cold start, without assuming stationarity or a warm start. Rather than tracking individual bounces and flips, we bound their expected total number through the time-integrated squared event rates. For each sampler, a Dynkin identity for a suitable observable bounds these integrals in terms of position- and velocity-dependent quantities, which are controlled using the convexity and smoothness of the potential together with the cold-start moment estimates. This turns a path-dependent counting problem into a moment estimate. The velocity-moment bounds also control the distance travelled within each window, and hence the expected number of rejected proposals. Balancing this rejection cost against the number of anchor evaluations determines the window length. Together, these estimates yield dimension-explicit query bounds over the full simulation horizon. The details are in Section~\ref{sec:proof}.

\subsection*{Related work}

The event-driven bounce mechanism underlying BPS originated with Peters and
de With
\cite{PetersDeWith2012} and was developed into the BPS framework by
Bouchard-C\^ot\'e, Vollmer and Doucet
\cite{BouchardCoteVollmerDoucet}.  A one-dimensional Zigzag process was
obtained as a scaling limit of lifted Metropolis--Hastings for the Curie--Weiss magnetization
\cite{BierkensRoberts2017} and was later developed as a general-purpose
sampler by Bierkens, Fearnhead and Roberts
\cite{BierkensFearnheadRoberts}.  Both belong to the broader PDMP Monte Carlo
family surveyed in \cite{FearnheadEtAl2018}.  Later examples include the
Coordinate Sampler, the Boomerang Sampler, and Forward Event-Chain Monte Carlo
\cite{WuRobertCoordinate,BierkensEtAlBoomerang,MichelDurmusSenecal}.

Exact trajectory simulation requires sampling event times from inhomogeneous
Poisson processes.  Classical thinning \cite{LewisShedler1979} and adaptive
thinning \cite{Ogata1981} underlie the local construction used in the original
BPS work \cite{BouchardCoteVollmerDoucet}.  Subsequent work develops analytic
envelopes \cite{SuttonFearnhead2023} and practical numerical bounding procedures
\cite{CorbellaSpencerRoberts2022,AndralKamatani2024}. Our work builds on this literature with a quantitative cold-start analysis.

Harris-type methods establish broad ergodicity results for these samplers.
For BPS, Deligiannidis, Bouchard-C\^ot\'e and Doucet proved geometric
ergodicity and a central limit theorem under explicit tail and curvature
conditions \cite{DeligiannidisBouchardCoteDoucet2019}, while Durmus, Guillin
and Monmarch\'e treated broader classes of targets and velocity laws using
coupling and quantitative minorization
\cite{DurmusGuillinMonmarche2020}.  For the Zigzag process,
Bierkens, Roberts and Zitt proved total-variation convergence, and also established exponential convergence and a central limit theorem for empirical averages under stronger tail assumptions \cite{BierkensRobertsZitt2019}. 
From the scaling-limit perspective, Bierkens, Kamatani and Roberts analyzed BPS
and Zigzag in the limit of $d \to \infty$ for a standard Gaussian target
\cite{BierkensKamataniRoberts2022}, and in a fixed-dimensional
extreme-anisotropy limit $\epsilon \to 0$ for Gaussian targets
\cite{BierkensKamataniRoberts2025}.  Agrawal,
Bierkens, Kamatani and Roberts study the transient motion from low-probability
regions toward a typical set using fluid limits, with expected event counts as
a proxy for computational cost
\cite{AgrawalBierkensKamataniRoberts2025}.  

Explicit nonasymptotic results quantify convergence, finite-time fluctuations,
and query counts.  Andrieu, Durmus, N\"usken and Roussel developed a general
$L^2$-hypocoercivity framework covering BPS and Zigzag
\cite{AndrieuDurmusNueskenRoussel2021}.  Lu and Wang derived explicit
dimension- and condition-number-dependent rates \cite{LuWangPDMP}, based on the space-time Poincare inequality approach \cite{CaoLuWangUnderdamped}; these are
the continuous-time inputs used here.  Their Zigzag complexity analysis
combines such a rate with a global event envelope to obtain a high-probability
coordinate-partial query bound under a warm-start assumption
\cite{LuWangZigzag}.  Beyond contraction rates,
Birrell and Rey-Bellet derived Bernstein-type concentration inequalities for
finite-time ergodic averages and uncertainty-quantification bounds
\cite{BirrellReyBellet2019}.  From the lifting viewpoint, Eberle and L\"orler interpret BPS and Zigzag as nonreversible lifts and show that any second-order lift can improve relaxation time by at most a square-root factor \cite{EberleLorlerLifts}.

Recent works also consider convergence on the entropy level. Under convexity, a spatial log--Sobolev inequality and suitable growth conditions, and with friction $\gamma=\Gamma\sqrt{\rho}$, Lu proves exponential relative-entropy decay at the sharp order $\sqrt{\rho}$ for every initial law with finite relative entropy \cite{LuUnderdampedEntropy}. Under analogous assumptions and a tame Hessian-growth condition, Li and Lu establish a space--time log--Sobolev inequality and hypocoercive hypercontractivity, yielding exponential decay of R\'enyi divergences of every order $q>1$ at the sharp hypocoercive order $\sqrt{\rho}$ \cite{LiLuHypercontractivity}. Unfortunately, for BPS and Zigzag with positive refreshment and compactly supported or standard Gaussian velocity laws, Monmarch\'e and Wang show that, even for a standard Gaussian target, no nontrivial uniform relative-entropy contraction over all initial laws in any relative-entropy neighborhood of equilibrium holds \cite{MonmarcheWangEntropy}; thus our analysis is based on $L^2$-hypocoercivity instead.

For comparison beyond PDMP samplers, Wu, Schmidler and Chen proved the optimal
warm-start complexity $\widetilde O(\kappa d^{1/2})$ for MALA
\cite{WuSchmidlerChen2022} (the earlier bound was
$\widetilde O(\kappa d+\kappa^{3/2}\sqrt d)$
\cite{DwivediChenWainwrightYu2019}).  Altschuler and Chewi
\cite{AltschulerChewi2024} obtain the same complexity from a feasible start by
combining a R\'enyi warm start constructed using discretized underdamped
Langevin dynamics and warm-started MALA within a proximal-sampler reduction.
Chen, Chewi, Daskalakis and Rakhlin
\cite{ChenChewiDaskalakisRakhlin2026} implement the proximal sampler using
gradient-only FORS and match this expected complexity for the same target
class.

With polynomial dependence on the target accuracy, Altschuler, Chewi and Zhang
\cite{AltschulerChewiZhang2026} analyze a randomized-midpoint discretization
of underdamped Langevin dynamics for smooth strongly log-concave targets, obtaining
$\widetilde O(\kappa^{5/6}d^{5/3}\varepsilon^{-2/3})$ gradient evaluations
from a Gaussian cold start to achieve KL error at most $\varepsilon^2$. Combining their discretization analysis with the
space--time log--Sobolev inequality 
\cite{LiLuHypercontractivity} yields
$\widetilde O(\kappa^{5/6}d^{1/3}\varepsilon^{-2/3})$ gradient evaluations for total-variation error at most $\varepsilon$, under the regularity conditions of the latter result
\cite[Theorem~5.3.17]{ChewiLogConcaveSampling}.

For lower bound of query complexity, Chewi, de Dios Pont, Li, Lu and Narayanan \cite{ChewiEtAlQueryLB} prove that,
for sufficiently large $d$, sampling even from
centered Gaussian targets to constant total-variation accuracy requires
$\Omega(\kappa^{1/2}\log d)$ first-order queries.  Consequently, in
this regime the $\kappa$ dependence of our BPS query bound is optimal, up to logarithmic factors.

Finally, concurrent work by Chen, Chewi, Rakhlin and Zhang
\cite{ChenChewi} develops a high-accuracy sampler by applying rejection
sampling to entire underdamped Langevin paths.
Combining the FORS mechanism \cite{ChenChewiDaskalakisRakhlin2026,ChenChewiDaskalakisRakhlinStochastic2026} with the space--time log--Sobolev inequality and hypocoercive
hypercontractivity \cite{LuUnderdampedEntropy, LiLuHypercontractivity}, they obtain expected
first-order-query complexity $\widetilde O_q(\kappa^{2/3}d^{1/3})$ for
achieving error at most $\varepsilon$ in R\'enyi divergence of order $q$.

\subsection*{Notation}

Throughout, $\abs{\cdot}$ is the Euclidean norm on $\R^d$,
$\abs{\cdot}_1$ is the $\ell^1$ norm, and $(a)_+:=\max\{a,0\}$.
For probability measures $\nu\ll\pi$, we write
\[
        \chi^2(\nu\Vert\pi)
        :=\int\left(\frac{\dd\nu}{\dd\pi}-1\right)^2\dd\pi .
\]
The notation $\widetilde O$ suppresses factors polylogarithmic in
$d$, $\kappa$, and $\varepsilon^{-1}$.  A gradient query evaluates
$\nabla U(y)$, while a coordinate-partial query evaluates
$\partial_iU(y)$; $d$ coordinate-partial queries count as one full-gradient
equivalent.  The letter $C$ denotes a universal constant whose
value may change from line to line.

\subsection*{Use of AI tools} The general algorithm design and analysis approach are created by the authors. LLM-based assistants were used in the preparation of this manuscript for drafting, language editing, and consistency checks.  The authors are responsible for independently checking every statement and proof and for the correctness and integrity of the final manuscript. Upon finishing the draft, we have been informed by Fan Chen, Sinho Chewi and Yiping Lu that ChatGPT Pro 5.6 was able to produce the result for BPS (before our draft becomes public) and in fact establish a high probability guarantee. We decided not to include the stronger result in the manuscript. 

\section{Preliminaries}\label{sec:prelim}

\subsection{Bouncy particle sampler}\label{ssec:bps}

The bouncy particle sampler (BPS) is a piecewise deterministic Markov process
on $\R^d\times\R^d$ with deterministic flow
$\dot X_t=V_t$, $\dot V_t=0$ between events.

For $\nabla U(x)\neq 0$, define the reflection
\begin{equation}\label{eq:reflection}
R_x v
:=
v-2\frac{\langle v,\nabla U(x)\rangle}
{\abs{\nabla U(x)}^2}\nabla U(x),
\end{equation}
and set $R_x v=v$ if $\nabla U(x)=0$.  At state $(x,v)$, bounces occur with
rate
\[
\lambda^{\bps}(x,v)
:=
\pos{\ip{v}{\nabla U(x)}},
\]
and each bounce replaces $v$ by $R_xv$.  Independently, refreshment events
occur at constant rate $\gamma^{\bps}>0$, and each refresh replaces $v$ by an
independent draw from $\varphi$.

Accordingly, the generator acts on sufficiently smooth $f$ as
\begin{equation}\label{eq:bps-generator}
\mathcal L^{\bps} f(x,v)
=
v\cdot\nabla_x f(x,v)
+
\lambda^{\bps}(x,v)\bigl(f(x,R_x v)-f(x,v)\bigr)
+
\gamma^{\bps}\left(
  \int_{\R^d}f(x,w)\varphi(\dd w)-f(x,v)
\right).
\end{equation}
The BPS process has only finitely many events on bounded time intervals and
has invariant law $\rho_\infty=\mu\otimes\varphi$
\cite{BouchardCoteVollmerDoucet}.

Note that $R_x$ is orthogonal, so
it preserves $\abs v$ and the Gaussian law $\varphi$.  It also reverses the
sign of $\ip{v}{\nabla U(x)}$; consequently, the speed $\abs{V_t}$ changes
only at refreshment times.  

\subsection{Zigzag sampler}\label{ssec:zz}

The Zigzag sampler is a piecewise deterministic Markov process on
$\R^d\times\R^d$ with the same deterministic flow
$\dot X_t=V_t$, $\dot V_t=0$ between events.  Let
$F_i v:=v-2v_i e_i$ denote the velocity obtained by flipping coordinate $i$.
At state $(x,v)$, the $i$th velocity coordinate flips with rate $\lambda_i^{\zz}(x,v)$, and the total coordinate-flip rate is
$\Lambda^{\zz}(x,v)$, where
\begin{equation}\label{eq:zz-rates}
\lambda_i^{\zz}(x,v):=\pos{v_i\partial_iU(x)},
\qquad
\Lambda^{\zz}(x,v):=\sum_{i=1}^d\lambda_i^{\zz}(x,v).
\end{equation}
At a coordinate-$i$ event, $v$ is replaced by $F_i v$.  Independently,
refreshment events occur at constant rate $\gamma^{\zz}>0$ and replace the
entire velocity by an independent draw from $\varphi$.  The corresponding
generator is
\begin{equation}\label{eq:zz-generator}
\mathcal L^{\zz}f(x,v)
=
v\cdot\nabla_xf(x,v)
+
\sum_{i=1}^d\lambda_i^{\zz}(x,v)
\bigl(f(x,F_i v)-f(x,v)\bigr)
+
\gamma^{\zz}\left(
  \int_{\R^d}f(x,w)\varphi(\dd w)-f(x,v)
\right).
\end{equation}
The Zigzag process likewise has only finitely many events on bounded time
intervals and has invariant law $\rho_\infty=\mu\otimes\varphi$
\cite{BierkensFearnheadRoberts,LuWangZigzag}.

Each $F_i$ preserves $\abs{v_j}$ for every $j$, and hence preserves both
$\abs v$ and the Gaussian law $\varphi$; it also reverses the sign of
$v_i\partial_iU(x)$.  Thus, as for BPS, the speed changes only at refreshment
times.  

\subsection{Quantitative \texorpdfstring{$\chi^2$}{chi-square} contraction estimates}\label{ssec:LW}

Our analysis uses the following quantitative contraction estimates of Lu and
Wang, stated with the refreshment rates explicit.

\begin{theorem}[Lu--Wang \cite{LuWangPDMP}, BPS]\label{thm:LW}
Assume \eqref{eq:hessian-bounds} and set $\gamma^{\bps}=\sqrt{dm}$.
There exists a universal constant $K^{\bps}\ge1$ such that, for every initial
law $\rho_0\ll\rho_\infty$ with
$\chi^2(\rho_0\Vert\rho_\infty)<\infty$, BPS initialized from $\rho_0$
satisfies
\begin{equation}\label{eq:LW-L2}
        \chi^2\!\left(
          \Law(X_t,V_t)\,\middle\Vert\,\rho_\infty
        \right)
        \le K^{\bps}
        \exp\left(-\frac{1}{K^{\bps}}\sqrt{\tfrac md}\,t\right)
        \chi^2(\rho_0\Vert\rho_\infty),
        \qquad t\ge0 .
\end{equation}
\end{theorem}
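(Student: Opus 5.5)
This is a cited result of Lu and Wang, so the plan is to reproduce the space--time Poincar\'e (variational hypocoercivity) argument of \cite{LuWangPDMP,CaoLuWangUnderdamped}, specialized to BPS with refreshment rate $\gamma^{\bps}=\sqrt{dm}$.

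Set $f_t=\dd\Law(X_t,V_t)/\dd\rho_\infty-1$, which has mean zero in $L^2(\rho_\infty)$. By the Kolmogorov equation for the adjoint generator (the time reversal of BPS, with $v\mapsto -v$ in transport and bounce), we get the dissipation identity
\[
\frac{\dd}{\dd t}\|f_t\|^2
=-2\mathcal E(f_t),
\qquad
\mathcal E(f)\ge \gamma^{\bps}\|f-\Pi_v f\|^2 .
\]
Here $\Pi_v$ averages over $v\sim\varphi$. The bounce part contributes a nonnegative Dirichlet form, because the reflection $R_x$ preserves $\varphi$ and the rate satisfies $\lambda(x,v)-\lambda(x,-v)=\ip{v}{\nabla U}$. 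This lower bound controls only the velocity-fluctuation part of $f_t$.

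The key step is a space--time Poincar\'e inequality on a time window $[0,T]$ with $T\asymp 1/\sqrt m$. For $g$ solving the (adjoint) kinetic equation with time-mean zero, we want
\[
\int_0^T\|g_t\|^2\dd t\le C_{\rm P}\int_0^T\|g_t-\Pi_vg_t\|^2\dd t ,
\qquad
C_{\rm P}\lesssim \frac{d}{m}.
\]
To prove it, we decompose $g=\Pi_vg+(g-\Pi_vg)$ and test the equation against a vector field built from a divergence (Lions) lemma on $[0,T]\times\R^d$ with weight $\mu$. That lemma writes the time--space average part as $\partial_t\phi_0+\nabla_x^*\phi$, with $H^1$ bounds that use the Brascamp--Lieb/Poincar\'e constant $1/m$ of $\mu$ and $L$-smoothness only through elliptic regularity. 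For strongly convex $U$, the needed $H^1$ and $H^2$ estimates follow from Bochner's formula, which is where $\nabla^2U\succeq m$ enters.

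The velocity moments of $\varphi$ produce the factor $d$: the transport $v\cdot\nabla_x$ projected on linear-in-$v$ functions has norm $1$, but the bounce operator acting on such functions is bounded only with $d$-dependent constants. Balancing this against the refreshment rate gives the choice $\gamma=\sqrt{dm}$.

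Combining the two pieces gives
\[
\|f_T\|^2\le\|f_0\|^2-2\gamma\int_0^T\|f-\Pi_vf\|^2\dd t ,
\]
and the middle term is at least $(\gamma/C_{\rm P})\int_0^T\|f_t\|^2\dd t\ge (\gamma T/C_{\rm P})\|f_T\|^2$ by monotonicity. With $\gamma T/C_{\rm P}\asymp\sqrt{dm}\cdot m^{-1/2}\cdot m/d=\sqrt{m/d}\,T\cdot\mathrm{const}$, iterating over consecutive windows yields the rate $\sqrt{m/d}/K$ and the prefactor $K$ from the non-integer final window.

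The main obstacle is the space--time Poincar\'e inequality, specifically controlling the bounce operator's action on the test functions uniformly in $d$ without losing more than a factor $d$, and obtaining the weighted elliptic estimates with constants depending only on $m$ (not $\kappa$). I would first handle the bounce term using $|\lambda^{\bps}|\le |v||\nabla U|$ together with Gaussian moment bounds and $\int|\nabla U|^2\dd\mu\le dL$, then refine it via the antisymmetric structure if $\kappa$ leaks in.
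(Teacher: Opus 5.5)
\textbf{Verdict.} The framework is right, but the proposal has a genuine gap. The paper gives no proof of this statement: it is imported from Lu--Wang. Your outline is the framework of that source: the dissipation bound $\frac{\dd}{\dd t}\|f_t\|^2\le-2\gamma^{\bps}\|f_t-\Pi_vf_t\|^2$ (which you derive correctly, including nonnegativity of the bounce Dirichlet form), plus a space--time Poincar\'e inequality on windows of length $\asymp m^{-1/2}$, iterated. As written, however, the sketch does not reach the stated rate, for two reasons.

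\textbf{The Poincar\'e constant is wrong.} Both sides of $\int_0^T\|g_t\|^2\dd t\le C_{\rm P}\int_0^T\|g_t-\Pi_vg_t\|^2\dd t$ scale identically under $x\mapsto cx$, $t\mapsto ct$. So $C_{\rm P}$ is scale-invariant and cannot be $d/m$. Your own arithmetic shows the mismatch: $\sqrt{dm}\cdot m^{-1/2}\cdot m/d=m/\sqrt d$, not $\sqrt{m/d}\,T\asymp d^{-1/2}$. Taken literally, it yields a rate of order $m^{3/2}/\sqrt d$. What the argument actually delivers is $C_{\rm P}\lesssim(\sqrt d+\gamma/\sqrt m)^2$. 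The $\gamma/\sqrt m$ term comes from the refreshment term $-\gamma\,v\cdot\phi$ in $\mathcal L^{\bps}(v\cdot\phi)$ together with $\|\phi\|\lesssim m^{-1/2}\|\Pi_vf\|$. At $\gamma=\sqrt{dm}$ this is $\asymp d$, so $\gamma/C_{\rm P}\asymp\sqrt{m/d}$; this balance is also why $\sqrt{dm}$ is the right refreshment rate.

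\textbf{The main obstacle is left open, and your plan for it fails.} This is the more serious problem. Bounding the bounce term via $\lambda^{\bps}\le\abs v\abs{\nabla U}$ and $\int\abs{\nabla U}^2\dd\mu\le dL$ puts $L$, hence $\kappa$, into a rate that must be $L$-free. Moreover, the quantity that actually appears is the weighted $\int\abs{\nabla U}^2\abs\phi^2\dd\mu$, not $\int\abs{\nabla U}^2\dd\mu$.

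The missing idea is an exact Gaussian computation of $\mathcal L^{\bps}(v\cdot\phi)$.
\begin{itemize}
\item With $n=\nabla U/\abs{\nabla U}$, the bounce part is $-2\pos{\ip v{\nabla U}}\ip vn\ip n\phi$, which depends on $\phi$ only through $\nabla U\cdot\phi$.
\item Its velocity average is $-\nabla U\cdot\phi$. Combined with $\operatorname{div}\phi$ from transport, this gives exactly $\Pi_v\mathcal L^{\bps}(v\cdot\phi)=-\nabla_x^*\phi$, which is what lets the test function reproduce $\|\Pi_vf\|^2$.
\item Its fluctuation has $L^2(\rho_\infty)$-norm $\sqrt5\,\|\nabla U\cdot\phi\|_{L^2(\mu)}$.
\item Write $\nabla U\cdot\phi=\nabla_x^*\phi+\operatorname{div}\phi$ with $\phi=\nabla_xw$ from the divergence lemma. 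Bochner's formula and $\nabla^2U\succeq0$ give $\|\nabla_x^2w\|\le\|\nabla_x^*\nabla_xw\|\lesssim\|\Pi_vf\|$, hence $\|\operatorname{div}\phi\|=\|\Delta w\|\le\sqrt d\,\|\nabla_x^2w\|$.
\end{itemize}
This trace bound, not Gaussian velocity moments, is the sole source of the dimension factor, and $L$ never enters. Without it, your argument gives at best a $\kappa$-dependent rate, not the stated $\frac1{K^{\bps}}\sqrt{m/d}$.
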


\begin{theorem}[Lu--Wang \cite{LuWangZigzag}, Zigzag]\label{thm:LW-ZZ}
Assume \eqref{eq:hessian-bounds} and set $\gamma^{\zz}=\sqrt L$.  There exists
a universal constant $K^{\zz}\ge1$ such that, for every initial law
$\rho_0\ll\rho_\infty$ with
$\chi^2(\rho_0\Vert\rho_\infty)<\infty$, Zigzag initialized from $\rho_0$
satisfies
\begin{equation}\label{eq:LW-ZZ}
        \chi^2\!\left(
          \Law(X_t,V_t)\,\middle\Vert\,\rho_\infty
        \right)
        \le K^{\zz}
        \exp\!\left(-\frac{m}{K^{\zz}\sqrt L}\,t\right)
        \chi^2(\rho_0\Vert\rho_\infty),
        \qquad t\ge0.
\end{equation}
\end{theorem}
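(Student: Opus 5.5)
The plan is to reproduce the $L^2$-hypocoercivity argument of Lu and Wang, based on the space--time Poincar\'e inequality of \cite{CaoLuWangUnderdamped}. Work with the relative density $f_t:=\frac{\dd\,\Law(X_t,V_t)}{\dd\rho_\infty}-1$, so that $\chi^2(\Law(X_t,V_t)\Vert\rho_\infty)=\|f_t\|^2_{L^2(\rho_\infty)}$.

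\textbf{Step 1: energy identity.} The function $f_t$ solves $\partial_tf=\mathcal L^{*}f$, where $\mathcal L^*$ is the $L^2(\rho_\infty)$-adjoint of \eqref{eq:zz-generator}. This adjoint is again a Zigzag-type generator, with drift $-v\cdot\nabla_x$ and flip rates $\pos{-v_i\partial_iU}$. I would first verify the energy identity
\[
\frac{\dd}{\dd t}\|f_t\|^2=-2\gamma^{\zz}\|f_t-\Pi f_t\|^2-\sum_i\int\lambda_i^{\zz}\,\bigl(f_t(x,F_iv)-f_t(x,v)\bigr)^2\dd\rho_\infty
\le-2\gamma^{\zz}\|f_t-\Pi f_t\|^2 .
\]
Here $\Pi$ denotes averaging in $v$ against $\varphi$. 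Both pieces come from the symmetric part of the generator, namely refreshment and the symmetrized flips. In particular $\|f_t\|$ is nonincreasing, so it suffices to show a strict contraction over a time window of fixed length $T$.

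\textbf{Step 2: space--time Poincar\'e inequality.} Fix $T\sim1/\sqrt L$. On $[0,T]\times\R^d$ with the measure $\frac{\dd t}{T}\otimes\mu$, I would use the space--time Poincar\'e inequality of \cite{CaoLuWangUnderdamped}:
\[
\|g-\langle g\rangle\|^2\le C\Bigl(\|\partial_tg\|^2_{H^{-1}}+\|\nabla_xg\|^2_{H^{-1}}\Bigr),
\]
where the $H^{-1}$ norms are in the space--time sense and the constant depends on $m$ and $T$ through $\tfrac1{mT^2}+1$-type factors. Apply it to the macroscopic part $g=\Pi f$, noting that $\langle g\rangle=0$ by mass conservation.

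\textbf{Step 3: control of the $H^{-1}$ norms (the main obstacle).} Both $H^{-1}$ norms must be bounded by the microscopic part $f-\Pi f$, using the projected equations.
\begin{itemize}
\item For $\partial_t g$: since $\Pi(v\cdot\nabla_x\Pi f)=0$, we get $\partial_t\Pi f=\Pi(v\cdot\nabla_x(f-\Pi f))$, which is $H^{-1}$-controlled by $\|f-\Pi f\|$.
\item For $\nabla_x g$: testing the equation against $v_i$ gives $\nabla_x\Pi f=-\partial_t\Pi(vf)+(\text{microscopic terms})+(\text{flip terms})$.
\end{itemize}
The flip terms are where the difficulty lies. Their contribution $\Pi\bigl(v_i\sum_j(\lambda_j\text{-jumps})\bigr)$ involves $\abs{\partial_iU}$ multiplied by the microscopic part. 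I would bound it in $H^{-1}$ using the weighted estimate $\|\,\abs{\nabla U}h\|_{H^{-1}(\mu)}\lesssim\sqrt L\,\|h\|$. This estimate comes from integration by parts against $e^{-U}$ together with $\nabla^2U\preceq L$. Zigzag needs the coordinate-wise version, because the rates split across coordinates; that is where I would spend most care.

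\textbf{Step 4: closing the argument.} Combining Steps 2 and 3 gives
\[
\int_0^T\|\Pi f_t\|^2\dd t\le C\Bigl(\tfrac1{mT^2}+\tfrac{L}{m}+\tfrac{\gamma^2}{m}\Bigr)\int_0^T\|f_t-\Pi f_t\|^2\dd t ,
\]
and trivially $\int_0^T\|f_t-\Pi f_t\|^2\le\int_0^T\|f_t\|^2$. Together with Step 1 and monotonicity, this yields
\[
\|f_T\|^2\le\bigl(1-c\,\gamma T\,\min\{1,mT^2,m/L\}\bigr)\|f_0\|^2 .
\]
With $\gamma^{\zz}=\sqrt L$ and $T=1/\sqrt L$, the contraction factor per window is $1-cm/L$. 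Since each window has length $1/\sqrt L$, iterating over windows gives the exponential rate $\frac{m}{K^{\zz}\sqrt L}$. The prefactor $K^{\zz}$ absorbs the loss from the initial incomplete window, via the monotonicity of $\|f_t\|$. This gives \eqref{eq:LW-ZZ}.
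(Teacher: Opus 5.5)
The paper does not prove this statement; it quotes it from Lu--Wang. Your skeleton is the method of the cited works: dissipation from refreshment, a space--time Poincar\'e inequality for $\Pi f$, $H^{-1}$ bounds from the projected equations, and iteration over windows of length $L^{-1/2}$. Step~1 and the iteration are fine. The gap is Step~3, which is exactly where the dimension-free rate $m/\sqrt L$ is decided, and as sketched it loses powers of $d$ in two places.

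First, the macroscopic equation is wrong. With the adjoint written with drift $-v\cdot\nabla_x$, the flip part of $\mathcal L^*$ does not vanish under $\Pi$; it contributes $\nabla U\cdot\Pi(vf)$. The correct equation is $\partial_t\Pi f=(\nabla U-\nabla_x)\cdot\Pi\bigl(v(f-\Pi f)\bigr)$, i.e.\ the $L^2(\mu)$-adjoint of the gradient. This gives $\langle\partial_t\Pi f,\phi\rangle_{L^2(\mu)}=\langle\Pi(v(f-\Pi f)),\nabla\phi\rangle_{L^2(\mu)}$, which is dimension-free. Your $\Pi\bigl(v\cdot\nabla_x(f-\Pi f)\bigr)$ is a plain divergence with the wrong sign. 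Its pairing with $\phi$ picks up an extra term $\langle\Pi(v(f-\Pi f)),\phi\nabla U\rangle_{L^2(\mu)}$. That term is controlled only through $\|\phi\nabla U\|_{L^2(\mu)}$, which is of order $\sqrt{Ld}$ for $\phi\equiv1$.

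Second, the weighted estimate $\|\,\abs{\nabla U}h\|_{H^{-1}(\mu)}\lesssim\sqrt L\|h\|$ fails by a factor $\sqrt d$. Take $U=\frac L2\abs{x}^2$ and $h\propto\abs{\nabla U}$; testing against $\phi\equiv1$ gives $\sqrt{Ld}\,\|h\|$. If constants are excluded, use $h\propto x_1\abs{\nabla U}$ and $\phi=x_1$ instead.

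What is true is the coordinatewise bound $\|\phi\,\partial_iU\|_{L^2(\mu)}^2\le2L\|\phi\|^2+4\|\partial_i\phi\|^2$. It follows from one integration by parts against $e^{-U}$ and $\partial_{ii}U\le L$. Even this is not enough if the coordinates are handled separately. The flip remainder in the $v_i$-moment equation is $-2\,\Pi\bigl((v_i\partial_iU)_+v_i h\bigr)$ with $h=f-\Pi f$. Bounding each $i$ by Cauchy--Schwarz and then summing over $i$ loses $\sqrt d$.

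The missing idea is to use $\Pi h=0$ together with independence of the Gaussian velocity coordinates. Set $w_i:=(v_i\partial_iU)_+v_i\phi_i$. Then $\E_v[h\sum_iw_i]\le\|h\|_{L^2(\varphi)}\bigl(\sum_i\mathrm{Var}(w_i)\bigr)^{1/2}$, and $\mathrm{Var}(w_i)\le\frac32(\partial_iU)^2\phi_i^2$. Combined with the coordinatewise bound, the flip contribution is $\lesssim\|h\|\bigl(L\|\Phi\|^2+\|\nabla\Phi\|^2\bigr)^{1/2}$, uniformly in $d$. The same centering is needed for the transport remainder $\E_v[h\,v^\top(\nabla\Phi)v]$. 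Without it, the term $(\operatorname{tr}\nabla\Phi)^2\le d\|\nabla\Phi\|^2$ costs a factor $d$.

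With these repairs Step~4 closes as you describe, with two small corrections. The inequality needed there is $\int_0^T\|f_t\|^2\le(1+C\kappa)\int_0^T\|f_t-\Pi f_t\|^2$, obtained from the orthogonal splitting, not the trivial reverse bound. The minimum should also contain $m/\gamma^2$, which equals $m/L$ at $\gamma=\sqrt L$.
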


Throughout, we use the explicit Gaussian cold start
\begin{equation}\label{eq:cold-start}
        \rho_0= N(x_\star,L^{-1}I_d)\otimes N(0,I_d).
\end{equation}
The Hessian bounds yield the density comparison
\begin{equation}\label{eq:cold-start-chi2}
        \frac{\dd\rho_0}{\dd\rho_\infty}\le\kappa^{d/2},
        \qquad
        \chi^2(\rho_0\Vert\rho_\infty)\le\kappa^{d/2}-1.
\end{equation}
Thus Theorems~\ref{thm:LW} and~\ref{thm:LW-ZZ} apply.  The
exponentially large initial $\chi^2$ divergence enters the mixing horizons
through its logarithm, and contributes a factor of $d$ in the complexity bound.

\section{Windowed thinning and complexity theorems}\label{sec:wt}

The convergence estimates of Section~\ref{sec:prelim} specify how long each
process must run.  This section combines those continuous-time guarantees with
exact simulation algorithms to obtain end-to-end complexity bounds. We first introduce windowed Poisson thinning and
present its BPS and Zigzag implementations, then state the resulting
end-to-end complexity theorems.  The detailed query estimates are deferred to
Section~\ref{sec:proof}.

Along each continuous-time process, events arrive at a time-varying Poisson
rate.  Thinning generates proposals from an upper rate, or envelope, determined
by the trajectory observed so far.  A proposal is accepted with probability
equal to the ratio of the true rate to this envelope.  If the envelope bounds
the true rate at every time, the accepted events have exactly the desired law.

Windowed thinning keeps this envelope local.  We divide the simulation horizon
into deterministic windows and evaluate the gradient at the particle's
current position at the beginning of each window.   The $L$-Lipschitz continuity of $\nabla U$ then controls
how far the true event rate can move from this anchored estimate as the
particle travels.  The window length sets a natural tradeoff: short windows
use more anchor evaluations but keep the envelope tight, whereas long windows
cause the envelope to loosen and
generate more rejected proposals.  We choose the window length by balancing
these two effects.

\subsection{Windowed thinning for BPS}\label{ssec:wt-bps}

Recall that for BPS the bounce rate is
$\pos{\ip{V_{s-}}{\nabla U(X_{s-})}}$.  Here and below, $s-$ denotes the left
limit, since an event may occur at time $s$.
At the beginning of the window $[t_k,t_{k+1})$, we know the anchor gradient
$G_k=\nabla U(X_{t_k})$, but not its future values along the path.  Let
$D_s:=\int_{t_k}^s\abs{V_r}\dd r$ denote the cumulative distance travelled from the
anchor.  Smoothness controls the deviation:
\[
        \abs{\nabla U(X_{s-})-G_k}
        \le L\abs{X_{s-}-X_{t_k}}
        \le L D_s,
\]
which motivates the envelope
\begin{equation}\label{eq:bps-envelope}
        \bar\lambda_s
        :=\pos{\ip{V_{s-}}{G_k}}+L\abs{V_{s-}}D_s.
\end{equation}
Indeed, it bounds the true rate at every time:
\begin{align*}
        \lambda^{\bps}(X_{s-},V_{s-})
        &=\pos{\ip{V_{s-}}{\nabla U(X_{s-})}}\\
        &\le\pos{\ip{V_{s-}}{G_k}}
             +\abs{\ip{V_{s-}}{\nabla U(X_{s-})-G_k}}\\
        &\le\pos{\ip{V_{s-}}{G_k}}+L\abs{V_{s-}}D_s
        =\bar\lambda_s.
\end{align*}

Between state changes and window boundaries, the velocity is constant and
$D_s$ is affine.  Hence $\bar\lambda_s$ is affine on each such interval, and
its time integral is quadratic, so the next proposal time can be found by
solving a quadratic equation.
Algorithm~\ref{alg:bps} gives the resulting procedure, incorporating this
envelope together with the refreshment clock and the next window boundary.
In both algorithms, $\operatorname{Exp}(a)$ denotes the exponential
distribution with rate $a$.

\begin{algorithm}[htb]
\caption{Windowed thinning for BPS}\label{alg:bps}
\begin{algorithmic}[1]
\REQUIRE horizon $T>0$, window length $\tau>0$, refreshment rate
         $\gamma^{\bps}$, initial state $(X_0,V_0)$
\STATE $t\gets0$; partition $[0,T]$ into windows $[t_k,t_{k+1})$,
       $t_k=k\tau$, the last window truncated at $T$
\FOR{each nonempty window $[t_k,t_{k+1})$}
  \STATE query the anchor gradient $G_k\gets\nabla U(X_{t_k})$; set
         $D_{t_k}\gets0$
  \WHILE{$t<t_{k+1}$}
    \STATE draw $E\sim\operatorname{Exp}(1)$; generate the next proposal time
           $t_{\rm p}>t$ by solving
           $\int_t^{t_{\rm p}}\bar\lambda_s\dd s=E$, using
           \eqref{eq:bps-envelope}; set $t_{\rm p}=\infty$ if there is no
           solution before $t_{k+1}$
    \STATE draw $t_{\rm r}\gets t+\operatorname{Exp}(\gamma^{\bps})$
           independently
    \STATE $t\gets\min\{t_{\rm p},t_{\rm r},t_{k+1}\}$; advance $X$ and $D$
           deterministically to time $t$
    \IF{$t=t_{\rm p}$}
      \STATE query $\nabla U(X_{t-})$; draw
             $A\sim\operatorname{Unif}(0,1)$
      \IF{$A\le\lambda^{\bps}(X_{t-},V_{t-})/\bar\lambda_t$}
        \STATE $V_t\gets R_{X_{t-}} V_{t-}$, using the queried gradient
      \ELSE
        \STATE $V_t\gets V_{t-}$
      \ENDIF
    \ELSIF{$t=t_{\rm r}$}
      \STATE draw $V_t\sim\varphi$, retaining $G_k$ and $D$
    \ENDIF
  \ENDWHILE
\ENDFOR
\RETURN $(X_T,V_T)$
\end{algorithmic}
\end{algorithm}

The envelope bound above and standard Poisson thinning show that
Algorithm~\ref{alg:bps} simulates BPS exactly.  A proposal can occur only when
$\bar\lambda_t>0$, so its acceptance ratio is well defined.  The complete
proof that the construction is well defined and exact is given in
Proposition~\ref{prop:bps-thinning}.

\subsection{Windowed thinning for Zigzag}\label{ssec:wt-zz}

Recall that for Zigzag, the flip rate of coordinate $i$ is given by
\[
        \lambda_i^{\zz}(X_{s-},V_{s-})
        =\pos{V_{i,s-}\partial_iU(X_{s-})}.
\]
At the beginning of the window $[t_k,t_{k+1})$, we know the anchor gradient
$G_k=\nabla U(X_{t_k})$, but not the future coordinate derivatives along the
path.  As in the BPS case, let
$D_s:=\int_{t_k}^s\abs{V_r}\dd r$ denote the cumulative distance travelled from the
anchor.  Then, for every coordinate $i$,
\[
        \abs{\partial_iU(X_{s-})-G_{k,i}}
        \le\abs{\nabla U(X_{s-})-G_k}
        \le L\abs{X_{s-}-X_{t_k}}
        \le L D_s.
\]
This motivates the coordinatewise envelopes and their sum
\begin{equation}\label{eq:zz-envelope}
        \bar\lambda_{i,s}
        :=\pos{V_{i,s-}G_{k,i}}+L\abs{V_{i,s-}}D_s,
        \qquad
        \bar\Lambda_s:=\sum_{i=1}^d\bar\lambda_{i,s}.
\end{equation}
Indeed, each envelope bounds its coordinate's true rate:
\begin{align*}
        \lambda_i^{\zz}(X_{s-},V_{s-})
        &=\pos{V_{i,s-}\partial_iU(X_{s-})}\\
        &\le\pos{V_{i,s-}G_{k,i}}
             +\abs{V_{i,s-}}\,
              \abs{\partial_iU(X_{s-})-G_{k,i}}\\
        &\le\pos{V_{i,s-}G_{k,i}}+L\abs{V_{i,s-}}D_s
        =\bar\lambda_{i,s}.
\end{align*}
Summing over the coordinates gives
$\Lambda^{\zz}(X_{s-},V_{s-})\le\bar\Lambda_s$.  The coordinate proposal
clocks can therefore be combined into a single clock with rate
$\bar\Lambda_s$.  When it rings, coordinate $i$ is selected
with probability $\bar\lambda_{i,s}/\bar\Lambda_s$, and only
$\partial_iU(X_{s-})$ needs to be queried to decide whether that flip is
accepted.

Between state changes and window boundaries, the velocity is constant and
$D_s$ is affine.  Hence the total envelope $\bar\Lambda_s$ is affine on each
such interval, and its time integral is quadratic, so the next proposal time
can be found by solving a quadratic equation.  Algorithm~\ref{alg:zz} gives
the resulting procedure,
incorporating this envelope together with the refreshment clock and the next
window boundary.

\begin{algorithm}[htb]
\caption{Windowed thinning for Zigzag}\label{alg:zz}
\begin{algorithmic}[1]
\REQUIRE horizon $T>0$, window length $\tau>0$, refreshment rate
         $\gamma^{\zz}$, initial state $(X_0,V_0)$
\STATE $t\gets0$; partition $[0,T]$ into windows $[t_k,t_{k+1})$,
       $t_k=k\tau$, the last window truncated at $T$
\FOR{each nonempty window $[t_k,t_{k+1})$}
  \STATE query the anchor gradient $G_k\gets\nabla U(X_{t_k})$; set
         $D_{t_k}\gets0$
  \WHILE{$t<t_{k+1}$}
    \STATE draw $E\sim\operatorname{Exp}(1)$; generate the next proposal time
           $t_{\rm p}>t$ by solving
           $\int_t^{t_{\rm p}}\bar\Lambda_s\dd s=E$, using
           \eqref{eq:zz-envelope}; set $t_{\rm p}=\infty$ if there is no
           solution before $t_{k+1}$
    \STATE draw $t_{\rm r}\gets t+\operatorname{Exp}(\gamma^{\zz})$
           independently
    \STATE $t\gets\min\{t_{\rm p},t_{\rm r},t_{k+1}\}$; advance $X$ and $D$
           deterministically to time $t$
    \IF{$t=t_{\rm p}$}
      \STATE draw coordinate $i$ with probability
             $\bar\lambda_{i,t}/\bar\Lambda_t$
      \STATE query $\partial_iU(X_{t-})$; draw
             $A\sim\operatorname{Unif}(0,1)$
      \IF{$A\le\lambda_i^{\zz}(X_{t-},V_{t-})/\bar\lambda_{i,t}$}
        \STATE $V_t\gets F_i V_{t-}$
      \ELSE
        \STATE $V_t\gets V_{t-}$
      \ENDIF
    \ELSIF{$t=t_{\rm r}$}
      \STATE draw $V_t\sim\varphi$, retaining $G_k$ and $D$
    \ENDIF
  \ENDWHILE
\ENDFOR
\RETURN $(X_T,V_T)$
\end{algorithmic}
\end{algorithm}

The envelope bounds above and standard Poisson thinning show that
Algorithm~\ref{alg:zz} simulates Zigzag exactly.  A coordinate can be
proposed only when $\bar\lambda_{i,t}>0$, so its acceptance ratio is well
defined.  The complete proof that the construction is well defined and exact
is given in
Proposition~\ref{prop:zz-thinning}.

\subsection{Main complexity theorems}\label{ssec:main-thms}

For Algorithm~\ref{alg:bps}, let $Q_\nabla(T)$ denote the expected number of
full-gradient queries up to time $T$.  For Algorithm~\ref{alg:zz}, let
$Q_\partial(T)$ denote the expected number of coordinate-partial queries and
set $Q_{\rm eq}(T):=Q_\partial(T)/d$ for the corresponding number of
full-gradient equivalents.  These quantities measure oracle calls only; a
direct implementation of Zigzag also uses $O(d)$ arithmetic operations per
proposal to form and sample from the envelope weights.

Combining the mixing estimates of Section~\ref{sec:prelim} with the
fixed-horizon query estimates proved in Section~\ref{sec:proof} gives
end-to-end guarantees from the Gaussian cold start.  We state the BPS and
Zigzag results in parallel, giving the latter in both coordinate-partial and
full-gradient-equivalent units.

\begin{theorem}[Cold-start BPS complexity]\label{thm:main}
Suppose that \eqref{eq:hessian-bounds} holds and initialize BPS according to
\eqref{eq:cold-start}.  Algorithm~\ref{alg:bps} is exact.  Set
$\gamma^{\bps}=\sqrt{dm}$.  For $\varepsilon\in(0,1/2)$, define
\begin{equation}\label{eq:bps-horizon}
        \widehat T_\varepsilon^{\bps}
        :=\max\left\{
          2K^{\bps}\sqrt{\frac dm}\,
          \log\!\left(
            1+\frac{\sqrt{K^{\bps}\chi^2(\rho_0\Vert\rho_\infty)}}
                     {2\varepsilon}\right),
          L^{-1/2},\frac{\gamma^{\bps}}{4L}\right\}.
\end{equation}
At time $\widehat T_\varepsilon^{\bps}$, the joint $\chi^2$ error is at most
$4\varepsilon^2$.  Consequently, the joint law and its position marginal
have total-variation error at most $\varepsilon$.

Choose $\tau=(Ld)^{-1/2}$.  The expected query count satisfies
\begin{equation}\label{eq:main-cold}
        Q_\nabla(\widehat T_\varepsilon^{\bps})
        \le 5\sqrt{Ld}\,\widehat T_\varepsilon^{\bps}
        =O\!\left(\kappa^{1/2}d\,
          \left(d\log\kappa+\log\frac1\varepsilon\right)\right).
\end{equation}
\end{theorem}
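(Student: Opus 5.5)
The proof has three parts: exactness, the mixing horizon, and the query count. Exactness is the standard thinning argument. On each window the envelope \eqref{eq:bps-envelope} dominates the true rate, so proposals from $\bar\lambda$ accepted with probability $\lambda^{\bps}/\bar\lambda$ give an inhomogeneous Poisson process of bounces with intensity $\lambda^{\bps}$. The refreshment clock is independent, and window boundaries are deterministic stopping times that only reset the anchor; this is the content of Proposition~\ref{prop:bps-thinning}. For the horizon, apply Theorem~\ref{thm:LW} with $t=\widehat T_\varepsilon^{\bps}$. The first entry of the max gives $\exp(-t\sqrt{m/d}/K^{\bps})\le(1+\sqrt{K^{\bps}\chi^2_0}/(2\varepsilon))^{-2}$, so $\chi^2\le K^{\bps}\chi^2_0\cdot 4\varepsilon^2/(K^{\bps}\chi^2_0)=4\varepsilon^2$. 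Then $\mathrm{TV}\le\frac12\sqrt{\chi^2}\le\varepsilon$, and marginalization only decreases TV.

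For the query count, split $Q_\nabla(T)$ into anchor queries and proposal queries. The anchor count is $\lceil T/\tau\rceil\le T\sqrt{Ld}+1$. Since $T\ge L^{-1/2}$, this is at most $2\sqrt{Ld}\,T$. By the compensator of the thinning point process, the expected number of proposals equals $\E\int_0^T\bar\lambda_s\dd s$. Split it as
\[
\E\int_0^T\pos{\ip{V_s}{G_{k(s)}}}\dd s+L\,\E\int_0^T\abs{V_s}D_s\dd s .
\]

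The second (rejection) term is handled with velocity moments. The speed $\abs{V}$ changes only at refreshments, and refreshments draw from $\varphi$, so $\abs{V_s}$ has law $\chi_d$ for all $s$ because $V_0\sim N(0,I_d)$. Bound $\abs{V_s}D_s\le\int_{t_k}^s\abs{V_s}\abs{V_r}\dd r$. Cauchy–Schwarz gives $\E\abs{V_s}\abs{V_r}\le d$, so each window contributes at most $Ld\tau^2/2$. Summing over windows gives at most $Ld\tau T/2=\tfrac12\sqrt{Ld}\,T$ (plus a truncated last window, which is no worse).

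The first term is the main obstacle: it compares the anchored rate with the true rate. Write $\pos{\ip{V}{G_k}}\le\lambda^{\bps}(X_s,V_s)+L\abs{V_s}D_s$. The correction again costs at most $\tfrac12\sqrt{Ld}\,T$. It remains to bound $\E\int_0^T\lambda^{\bps}(X_s,V_s)\dd s$ from the cold start, i.e.\ the expected number of true bounces. The plan is to use Dynkin with an observable like $f=\ip{v}{\nabla U(x)}$ or $f=\ip{x-x_\star}{v}$. For the latter,
\[
\mathcal L f=\abs v^2-2\lambda^{\bps}\frac{\ip{x-x_\star}{\nabla U}\,\ip{v}{\nabla U}}{\abs{\nabla U}^2}\cdot(\ldots)-\gamma^{\bps} f .
\]
Using $\ip{x-x_\star}{\nabla U}\ge m\abs{x-x_\star}^2$, this isolates $\int\lambda$ against boundary terms $\E\abs{f(X_T,V_T)}+\E\abs{f(X_0,V_0)}$ and $\gamma\int\E\abs f$, and these are controlled by cold-start moments of $\abs{X_s-x_\star}^2$. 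An alternative I would try first is Cauchy–Schwarz, $\E\lambda\le(\E\ip{V}{\nabla U}^2)^{1/2}$, with $\E\abs{\nabla U(X_s)}^2\lesssim Ld$. That needs a uniform-in-time second-moment bound $\E\abs{X_s-x_\star}^2\lesssim d/m$; this bound is too weak in $\kappa$, so the refined Dynkin route is required to get $\E\int_0^T\lambda\,\dd s\le C\sqrt{Ld}\,T+$ lower order. The extra entry $\gamma^{\bps}/(4L)$ in the horizon is presumably there to absorb these boundary terms into $\sqrt{Ld}\,T$. Adding up gives $Q_\nabla\le(2+\tfrac12+\tfrac12+2)\sqrt{Ld}\,T\le5\sqrt{Ld}\,T$. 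Finally, $\log\chi^2_0\le\frac d2\log\kappa$ by \eqref{eq:cold-start-chi2}, so $\sqrt{Ld}\,\widehat T\asymp\sqrt{\kappa}\,d\,(d\log\kappa+\log\frac1\varepsilon)$.
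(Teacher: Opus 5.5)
Your overall architecture matches the paper, and those parts are fine: exactness by thinning, the Lu--Wang horizon computation, and the split of $Q_\nabla$ into anchors, accepted bounces and rejections, with rejections bounded by $2L\abs{V_{t-}}D_t$ and velocity moments. The gap is in the step you call the crux, the bound $B^{\bps}(T)\le 2\sqrt{Ld}\,T$. There are two concrete defects there.

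\textbf{The lower bound on the spatial ratio is too weak.} With $\psi=\ip{x-x_\star}{v}$ the generator is exactly $\mathcal L^{\bps}\psi=\abs v^2-2\lambda^{\bps}(x,v)^2r(x)-\gamma^{\bps}\psi$, where $r(x)=\ip{x-x_\star}{\nabla U(x)}/\abs{\nabla U(x)}^2$. So Dynkin controls $\int_0^T\E[(\lambda^{\bps})^2r]\dd t$, not $\int\E\lambda^{\bps}$. You then need a uniform lower bound on $r$, followed by Cauchy--Schwarz on $[0,T]\times\Omega$. The inequality you propose, $\ip{x-x_\star}{\nabla U}\ge m\abs{x-x_\star}^2$, combined with $\abs{\nabla U(x)}\le L\abs{x-x_\star}$, only gives $r\ge m/L^2$. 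That yields $\int_0^T\E(\lambda^{\bps})^2\dd t\lesssim\kappa LdT$ and $B^{\bps}(T)\lesssim\sqrt\kappa\,\sqrt{Ld}\,T$. This is exactly the $\sqrt\kappa$ loss you correctly diagnosed in your ``alternative'' route, so the Dynkin route as you describe it gives $O(\kappa d(\cdot))$, not $O(\kappa^{1/2}d(\cdot))$. The missing ingredient is co-coercivity. Write $\nabla U(x)=A_x(x-x_\star)$ with $A_x=\int_0^1\nabla^2U(x_\star+s(x-x_\star))\dd s$ and $mI_d\preceq A_x\preceq LI_d$. Then $A_x^2\preceq LA_x$, so $\ip{x-x_\star}{\nabla U(x)}\ge\abs{\nabla U(x)}^2/L$, i.e.\ $r\ge1/L$.

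\textbf{The refreshment term cannot be bounded through $\gamma^{\bps}\int_0^T\E\abs{\psi}\dd t$ and moments.} Using $(\E\abs{X_t-x_\star}^2)^{1/2}\le\sqrt{d/L}+\sqrt d\,t$, this quantity is of order $\gamma^{\bps}dT^2$. That exceeds the main term $dT$ by the factor $\gamma^{\bps}T/2$, which on the first branch of the horizon equals $K^{\bps}d\log\bigl(1+\sqrt{K^{\bps}\chi^2(\rho_0\Vert\rho_\infty)}/(2\varepsilon)\bigr)$ and is not $O(1)$. Even a uniform bound $\E\abs{X_t-x_\star}^2\lesssim d/m$ would leave an excess factor $\gamma^{\bps}/\sqrt m=\sqrt d$. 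The fix is the pathwise identity $\int_0^T\psi(X_t,V_t)\dd t=\tfrac12(\abs{X_T-x_\star}^2-\abs{X_0-x_\star}^2)$, which holds because $\frac{\dd}{\dd t}\abs{X_t-x_\star}^2=2\psi$. It gives $-\gamma^{\bps}\int_0^T\E\psi\dd t\le\gamma^{\bps}d/(2L)$, a $T$-independent term, and this is precisely what the horizon entry $\gamma^{\bps}/(4L)$ is there to absorb.

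Three further pieces are needed to finish. The terminal term satisfies $\abs{\E\psi(X_T,V_T)}\le d/\sqrt L+dT$, and your moment bound does handle it. Dynkin's formula for the unbounded $\psi$ must be justified via Lemma~\ref{lem:dynkin-polynomial}. Finally, Cauchy--Schwarz on $[0,T]\times\Omega$ turns the bound on squared rates into the bounce count. With the two fixes above you get $\int_0^T\E(\lambda^{\bps})^2\dd t\le\tfrac52LdT$ and $B^{\bps}(T)\le\sqrt{5/2}\,\sqrt{Ld}\,T$, and your bookkeeping then closes at $5\sqrt{Ld}\,T$.
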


\begin{theorem}[Cold-start Zigzag complexity]\label{thm:zz-complexity}
Suppose that \eqref{eq:hessian-bounds} holds and initialize Zigzag according
to \eqref{eq:cold-start}.  Algorithm~\ref{alg:zz} is exact.  Set
$\gamma^{\zz}=\sqrt L$.  For $\varepsilon\in(0,1/2)$, define
\begin{equation}\label{eq:zz-horizon}
        \widehat T_\varepsilon^{\zz}
        :=\max\left\{
          \frac{K^{\zz}\sqrt L}{m}
          \log\!\left(
            1+\frac{K^{\zz}\chi^2(\rho_0\Vert\rho_\infty)}
                     {4\varepsilon^2}\right),
          L^{-1/2}\right\}.
\end{equation}
At time $\widehat T_\varepsilon^{\zz}$, the joint $\chi^2$ error is at most
$4\varepsilon^2$.  Consequently, the joint law and its position marginal
have total-variation error at most $\varepsilon$.

Choose  $\tau=L^{-1/2}d^{-1/4}$. The expected query counts satisfy
\begin{equation}\label{eq:zz-cold-complexity}
\begin{aligned}
        Q_\partial(\widehat T_\varepsilon^{\zz})
        &\le5\sqrt L\,d^{5/4}\widehat T_\varepsilon^{\zz}
        =O\!\left(\kappa d^{5/4}
             \left(d\log\kappa+\log\frac1\varepsilon\right)\right),\\
        Q_{\rm eq}(\widehat T_\varepsilon^{\zz})
        &\le5\sqrt L\,d^{1/4}\widehat T_\varepsilon^{\zz}
        =O\!\left(\kappa d^{1/4}
             \left(d\log\kappa+\log\frac1\varepsilon\right)\right).
\end{aligned}
\end{equation}
\end{theorem}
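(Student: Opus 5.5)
The proof splits into three parts: exactness, the choice of horizon, and the query count.

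\emph{Exactness.} Within a window the envelope $\bar\Lambda_s$ dominates $\Lambda^{\zz}$ coordinatewise, by the computation preceding Algorithm~\ref{alg:zz}. Superposing independent Poisson clocks and thinning coordinate $i$ with probability $\lambda_i^{\zz}/\bar\lambda_{i,s}$ then reproduces the flip intensities $\lambda_i^{\zz}$ exactly. Refreshment is an independent clock, and window boundaries are deterministic stopping times at which the envelope is rebuilt, so the strong Markov property glues the windows together. This is what Proposition~\ref{prop:zz-thinning} records, and we invoke it.

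\emph{Horizon.} Substitute $t=\widehat T_\varepsilon^{\zz}$ into Theorem~\ref{thm:LW-ZZ}. The logarithm in \eqref{eq:zz-horizon} is chosen so that
\[
K^{\zz}e^{-mt/(K^{\zz}\sqrt L)}\chi^2(\rho_0\Vert\rho_\infty)\le 4\varepsilon^2 .
\]
Then $\mathrm{TV}\le\tfrac12\sqrt{\chi^2}\le\varepsilon$, and marginalising to the position coordinate can only decrease TV. For the $O(\cdot)$ form, use \eqref{eq:cold-start-chi2}: the logarithm is $O(d\log\kappa+\log(1/\varepsilon))$, so $\sqrt L\,\widehat T\lesssim\kappa(d\log\kappa+\log\frac1\varepsilon)$.

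\emph{Query count.} The total count $Q_\partial(T)$ has two parts. The first is anchors: $d\lceil T/\tau\rceil\le d(T/\tau+1)$ partials. With $\tau=L^{-1/2}d^{-1/4}$ and $T\ge L^{-1/2}$, this is at most $2\sqrt L\,d^{5/4}T$. The second is proposals, whose expected number is $\E\int_0^T\bar\Lambda_s\dd s$ by the compensator of the proposal process. Since $\pos{V_iG_{k,i}}\le \lambda_i^{\zz}(X_s,V_s)+\abs{V_i}LD_s$, we have
\[
\bar\Lambda_s\le \Lambda^{\zz}(X_s,V_s)+2L\abs{V_s}_1D_s .
\]
\begin{itemize}
\item \emph{Slack term.} We have $D_s\le\int_{t_k}^s\abs{V_r}\dd r$, and $\E\abs{V_r}^2=d$ for all $r$, because refreshments draw from $\varphi$ and flips preserve the law. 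Cauchy--Schwarz gives $\E[\abs{V_s}_1\abs{V_r}]\le\sqrt d\cdot d$. Hence each window contributes at most $L d^{3/2}\tau^2$, and summed over the $T/\tau$ windows this is $O(Ld^{3/2}\tau T)=O(\sqrt L\,d^{5/4}T)$. This balancing against the anchor cost is what fixes $\tau$.
\item \emph{True-rate term.} Here I need $\E\int_0^T\Lambda^{\zz}\dd s\lesssim \sqrt L\,d^{5/4}T$; in fact I expect $O(\sqrt L\,d\,T)$. The crude bound is $\Lambda^{\zz}\le\abs{V}\abs{\nabla U}\le\abs V L\abs{X-x_\star}$, which gives $\E\Lambda^{\zz}\le L\sqrt d\,(\E\abs{X_s-x_\star}^2)^{1/2}$. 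I therefore need a uniform bound $\E\abs{X_s-x_\star}^2\lesssim d/L$, or a time-averaged bound of the form $\E\int_0^T\abs{\nabla U}^2\lesssim dLT+(\text{const})$.
\end{itemize}

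To get the time-averaged bound I would use Dynkin's formula for $f=\ip{x-x_\star}{v}$ (the paper hints at such Lyapunov-type observables). Its generator is
\[
\mathcal L^{\zz}f=\abs v^2-2\sum_i\pos{v_i\partial_iU}\,(x-x_\star)_iv_i-\gamma f,
\]
which relates $\int\Lambda$-weighted quantities to kinetic energy. Combined with $U(X_t)-U(x_\star)$, whose time derivative is $\ip{V}{\nabla U}$, this should give
\[
\E\int_0^T\Lambda^{\zz}\dd s\le \E\int_0^T\sum_i\abs{V_i\partial_iU}\dd s,
\]
and a Dynkin identity on $U$ controls the signed part. Using $\mathcal L U=\ip{v}{\nabla U}$ and the symmetry $(a)_+=\tfrac12(\abs a+a)$, we get $\E\int\Lambda=\tfrac12\E\int\sum\abs{v_i\partial_iU}+\tfrac12(\E U(X_T)-\E U(X_0))$.

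The main obstacle is bounding $\E\int\sum_i\abs{V_i\partial_iU}$ from the cold start, i.e.\ controlling $\E\abs{\nabla U(X_s)}^2$ uniformly in time. My first attempt is the Lyapunov function $U+\tfrac{\gamma}{\cdot}f+\tfrac12\abs v^2$, whose generator gives drift $-c\abs{\nabla U}^2/\sqrt L+CdL^{1/2}$, with the initial value $\E U(X_0)-U_\star\le d/2$. Failing that, I would use Cauchy--Schwarz with the $\chi^2$ density bound to get $\E\abs{\nabla U}^2\lesssim Ld$ times a bounded factor once mixed. Collecting the anchor, slack and true-rate contributions with explicit constants then yields the constant $5$.
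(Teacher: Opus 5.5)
\textbf{Verdict: there is a genuine gap.} Your exactness, horizon, anchor and slack estimates are correct and coincide with the paper's, including the balancing $Ld^{3/2}\tau T=\sqrt L\,d^{5/4}T$ that fixes $\tau$. The gap is the true-rate term. You never bound the expected number of accepted flips $B^{\zz}(T)=\int_0^T\E\Lambda^{\zz}(X_t,V_t)\dd t$ from the cold start, and none of the routes you list works:
\begin{itemize}
\item For Zigzag, the flip part of $\mathcal L^{\zz}\ip{x-x_\star}{v}$ is $-2\sum_i\lambda_i^{\zz}(x,v)(x-x_\star)_iv_i$. This has no sign, because $\partial_iU(x)$ and $(x-x_\star)_i$ can have opposite signs when $\nabla^2U$ is not diagonal. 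The BPS argument with $r(x)\ge1/L$ uses the full inner product $\ip{x-x_\star}{\nabla U(x)}$ and does not split coordinatewise.
\item Writing $\pos a=\tfrac12(\abs a+a)$ only trades $\Lambda^{\zz}$ for $\sum_i\abs{V_i\partial_iU}$, which is no easier to bound.
\item A uniform bound $\E\abs{X_s-x_\star}^2\lesssim d/L$ is false in general. Under $\mu$ this moment can be of order $d/m$, and the crude estimate then loses a factor $\sqrt\kappa$.
\item The $\chi^2$ fallback fails. Theorem~\ref{thm:LW-ZZ} with \eqref{eq:cold-start-chi2} only gives $\chi^2(\Law(X_t,V_t)\Vert\rho_\infty)\le K^{\zz}\kappa^{d/2}e^{-mt/(K^{\zz}\sqrt L)}$, so the transient phase contributes a term exponential in $d$.
\item The drift $-c\abs{\nabla U}^2/\sqrt L$ you claim for your Lyapunov function is not justified. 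For any observable $\sum_ic_i(x)v_i$ the flip part is $-2\sum_i\lambda_i^{\zz}c_i(x)v_i$. Turning this into $\abs{\nabla U(x)}^2$ would need the velocity to be conditionally Gaussian given the position, which fails off equilibrium.
\end{itemize}

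\textbf{The missing idea.} Use $q(x,v)=\ip{v}{\nabla U(x)}$, and target the time-integrated squared rate $S=\sum_i(\lambda_i^{\zz})^2$ instead of $\abs{\nabla U}^2$.
\begin{itemize}
\item A flip of coordinate $i$ changes $q$ by $-2v_i\partial_iU(x)$, and $\lambda_i^{\zz}v_i\partial_iU=(\lambda_i^{\zz})^2$. Hence $\mathcal L^{\zz}q=v^\top\nabla^2U(x)v-2S-\gamma^{\zz}q$. Equivalently, $\mathcal L^{\zz}(U+q/\gamma^{\zz})=(v^\top\nabla^2U\,v-2S)/\gamma^{\zz}$; this is what your Lyapunov sketch becomes once $S$ is kept as it is.
\item Dynkin's formula applies, by polynomial growth and domination by $V_T^{\max}$. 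Combine it with the pathwise identity $\int_0^Tq(X_t,V_t)\dd t=U(X_T)-U(X_0)$.
\item Use $U(X_T)\ge U(x_\star)$, $\E U(X_0)-U(x_\star)\le d/2$, $\E q(X_0,V_0)=0$, and the terminal estimate $\abs{\E q(X_T,V_T)}\le L(\sqrt{d/L}+\sqrt d\,T)\sqrt d$.
\item Together these give $\int_0^T\E S\,\dd t\le LdT+\gamma^{\zz}d/4+\sqrt L\,d/2$. Only the crude, time-growing position moment at the single terminal time enters, never a uniform-in-time bound.
\item Then $(\Lambda^{\zz})^2\le dS$ and Cauchy--Schwarz on $[0,T]\times\Omega$ give $B^{\zz}(T)\le2d\sqrt L\,T$ for $T\ge L^{-1/2}$.
\end{itemize}
Combined with your anchor and slack terms, this yields the constant $5$.
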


\begin{remark}[Comparison with Lu--Wang]\label{rem:zz-comparison}
Lu and Wang \cite[Theorem~1]{LuWangZigzag} analyze Zigzag query complexity
using the global envelope
\[
        \pos{v_i\partial_iU(x+tv)}
        \le L\abs{v_i}\bigl(\abs x+t\abs v\bigr),
\]
in the normalization $x_\star=0$.  Under their Assumption~2 and stated
dimension-and-accuracy regime, an $O(1)$-$\chi^2$ warm start yields the
high-probability bound
$O(\kappa^2d^{3/2}(\log\frac1\varepsilon)^{3/2})$ coordinate-partial
queries, or
$O(\kappa^2d^{1/2}(\log(1/\varepsilon))^{3/2})$ full-gradient equivalents.
Our locally anchored envelope, together with direct control of the expected
event and proposal counts, gives
$\widetilde O(\kappa d^{5/4})$ full-gradient equivalents from the explicit
Gaussian cold start \eqref{eq:cold-start}. We note that \cite{LuWangZigzag} provides a
high-probability warm-start guarantee under some additional assumptions, while our result only provides guarantee of expected complexity.
\end{remark}

\section{Proofs}\label{sec:proof}

The convergence estimates above determine how long the process must be run,
but not the cost of producing its trajectory.  For the windowed thinning
schemes, this cost comes from the gradient evaluations at the window anchors
and at the proposed event times.  The proposals themselves consist of
accepted events, whose frequency is governed by the true PDMP intensities,
and rejections, whose frequency is governed by the gap between those
intensities and their windowed envelopes.

The finite-horizon control needed for this analysis is common to both
samplers.  Bounces and coordinate flips preserve the speed, while
refreshments redraw it, so the refreshment history provides a pathwise
upper bound for the position and velocity.  Besides controlling the envelope
error, this bound supplies the integrability needed for the Dynkin identities
used to estimate the accepted-event counts and rules out explosion of the
proposal processes.  We then treat BPS and Zigzag separately, according to
the geometry of their jump mechanisms.  

\subsection{Common estimates}\label{ssec:common-estimates}

On $[0,T]$, for both processes, the speed can change only when the velocity is refreshed.  Let
$N_T^{\rm ref}$ denote the number of refreshments and $Z_j$ the Gaussian
velocity drawn at the $j$th refreshment.  Define
\[
        V_T^{\max}:=\max\left\{\abs{V_0},
                    \max_{1\le j\le N_T^{\rm ref}}\abs{Z_j}\right\},
\]
with the inner maximum taken to be zero when $N_T^{\rm ref}=0$.

\begin{lemma}[Trajectory and velocity-moment bounds]
\label{lem:trajectory}
Let $(X_t,V_t)_{t\ge0}$ be either BPS or Zigzag, and suppose that
$\abs{X_0}+\abs{V_0}<\infty$ almost surely.  Then, for every $T<\infty$,
\begin{equation}\label{eq:trajectory-bounds}
        \sup_{0\le t\le T}\abs{V_t}\le V_T^{\max},
        \qquad
        \sup_{0\le t\le T}\abs{X_t-x_\star}
        \le\abs{X_0-x_\star}+T V_T^{\max}.
\end{equation}
If $V_0\sim N(0,I_d)$, then, for every $T<\infty$ and $r>0$,
\begin{equation}\label{eq:vmax-moments}
        \E[(V_T^{\max})^r]<\infty,
\end{equation}
and, for every $t\ge0$,
\begin{equation}\label{eq:velocity-second-moment}
        \E\abs{V_t}^2=d.
\end{equation}
In the Zigzag case, one also has, for every $t\ge0$,
\begin{equation}\label{eq:zz-velocity-moments}
        \E\abs{V_t}_1^2
        =d+\frac{2}{\pi}d(d-1)\le d^2.
\end{equation}
If the process is initialized from the cold start \eqref{eq:cold-start}, then,
for every $t\ge0$,
\begin{equation}\label{eq:position-second-moment}
        \bigl(\E\abs{X_t-x_\star}^2\bigr)^{1/2}
        \le\sqrt{d/L}+\sqrt d\,t.
\end{equation}
\end{lemma}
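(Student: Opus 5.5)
The plan is to exploit the single structural fact recorded in Section~\ref{sec:prelim}: bounces ($R_x$ is orthogonal) and coordinate flips ($F_i$ preserves every $\abs{v_j}$) leave the speed unchanged, so $\abs{V_t}$ changes only at refreshment times. \textbf{Pathwise bounds.} Between refreshments the speed is constant and equal either to $\abs{V_0}$ or to some $\abs{Z_j}$ with $j\le N_T^{\rm ref}$. Hence $\sup_{t\le T}\abs{V_t}\le V_T^{\max}$. Since $\dot X_t=V_t$ between events and $X$ is continuous, we get $\abs{X_t-x_\star}\le\abs{X_0-x_\star}+\int_0^t\abs{V_r}\dd r$, which gives \eqref{eq:trajectory-bounds}. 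One should note that the process is nonexplosive, so the path on $[0,T]$ is well defined; this is quoted from Section~\ref{sec:prelim}.

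\textbf{Moments of $V_T^{\max}$.} The refreshment count $N_T^{\rm ref}$ is Poisson with mean $\gamma T$, and the $Z_j$ are i.i.d.\ $N(0,I_d)$, independent of $N_T^{\rm ref}$ (the refreshment clock is independent of everything else). Bound the maximum by the sum:
\[
(V_T^{\max})^r\le\abs{V_0}^r+\sum_{j\le N_T^{\rm ref}}\abs{Z_j}^r .
\]
By Wald's identity, or simply by conditioning on $N_T^{\rm ref}$,
\[
\E[(V_T^{\max})^r]\le(1+\gamma T)\,\E\abs{Z}^r<\infty .
\]

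\textbf{Stationarity of the velocity law.} I claim that $\Law(V_t)=\varphi$ for all $t$ when $V_0\sim\varphi$, even though $X_0$ is not stationary. There are two routes.
\begin{itemize}
\item Use a Dynkin argument on functions $f(v)$. The flow term vanishes, the refreshment term preserves $\varphi$, and the jump term must be shown to integrate to zero. For BPS this is not immediate conditionally on $x$.
\item The cleaner route, which I would take, is pathwise. Conditionally on the whole position path and event times, the speed process is a pure refresh process. The speeds $\abs{V_t}$ are therefore exactly those of a velocity process that is redrawn from $\varphi$ at Poisson times and otherwise kept fixed, since reflections and flips don't alter the norm.
\end{itemize}
Because $\abs{V_t}$ equals $\abs{V_0}$ or the most recent $\abs{Z_j}$, and the index of the most recent refreshment depends only on the refreshment clock, independent of the $Z$'s and $V_0$, $\abs{V_t}^2$ has the $\chi^2_d$ law. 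This gives \eqref{eq:velocity-second-moment}.

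For Zigzag the same argument applies to the vector $(\abs{V_{1,t}},\dots,\abs{V_{d,t}})$, which flips also preserve. Its law is that of $(\abs{Z_1},\dots,\abs{Z_d})$, so
\[
\E\abs{V_t}_1^2=\sum_i\E Z_i^2+\sum_{i\ne j}(\E\abs{Z_i})^2=d+\tfrac{2}{\pi}d(d-1),
\]
and this is at most $d^2$ because $2/\pi<1$.

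\textbf{Position moment.} From $\abs{X_t-x_\star}\le\abs{X_0-x_\star}+\int_0^t\abs{V_r}\dd r$, apply Minkowski's inequality in $L^2(\Omega)$:
\[
\bigl(\E\abs{X_t-x_\star}^2\bigr)^{1/2}\le(\E\abs{X_0-x_\star}^2)^{1/2}+\int_0^t(\E\abs{V_r}^2)^{1/2}\dd r=\sqrt{d/L}+\sqrt d\,t,
\]
using the cold-start variance $d/L$ and \eqref{eq:velocity-second-moment}.

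The main obstacle is making the "most recent refreshment" argument rigorous. The identity of the current velocity's norm must be shown to be $\abs{V_0}$ or $\abs{Z_{N_t^{\rm ref}}}$, with $N_t^{\rm ref}$ independent of the draws. I would realize refreshment via an independent Poisson clock and independent i.i.d.\ draws, as in the algorithms, so that the independence is by construction.
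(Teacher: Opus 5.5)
Your proposal is correct and follows the paper's proof essentially step for step: speed is changed only by refreshments, the maximum is bounded by the sum using independence of the Poisson clock from the draws, the law of the speed is identified via the most recent refreshment, and Minkowski in $L^2(\Omega)$ handles the position. The one thing to fix is your headline claim $\Law(V_t)=\varphi$, which is false out of equilibrium: for BPS with $X_0=x_0\ne x_\star$ one gets $\frac{\dd}{\dd t}\E\ip{V_t}{\nabla U(x_0)}=-\abs{\nabla U(x_0)}^2<0$ at $t=0$, because bounces bias the direction downhill. Consequently your Dynkin route works only for radial $f$, and what your pathwise argument (like the paper's) actually proves, and all the lemma needs, is the law of $\abs{V_t}$, or of $(\abs{V_{1,t}},\dots,\abs{V_{d,t}})$ for Zigzag.
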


\begin{proof}
Bounces and coordinate flips preserve the velocity norm, so only refreshments
can change it.  This proves the first bound in
\eqref{eq:trajectory-bounds}; integrating $\dot X_t=V_t$ proves the second.
For \eqref{eq:vmax-moments}, $N_T^{\rm ref}$ is Poisson and
$(V_T^{\max})^r\le\abs{V_0}^r+\sum_{j\le N_T^{\rm ref}}\abs{Z_j}^r$.
The refreshment draws are independent of the clock, so the expectation of the
sum is
$\E N_T^{\rm ref}\,\E\abs{Z_1}^r<\infty$, proving the claim.

For either process, every non-refreshment update preserves $\abs V$, whereas
each refreshment redraws $V\sim N(0,I_d)$.  Since $V_0\sim N(0,I_d)$,
$\abs{V_t}$ has the $\chi_d$ law at every deterministic time, which gives
\eqref{eq:velocity-second-moment}.

For Zigzag, coordinate flips leave
$(\abs{V_{1,t}},\ldots,\abs{V_{d,t}})$ unchanged.  At each refreshment this
vector is redrawn as the coordinatewise absolute value of an $N(0,I_d)$
vector.  Since $V_0\sim N(0,I_d)$, it has this distribution at every
deterministic time $t\ge0$.  Hence, with $Z\sim N(0,I_d)$,
\[
        \E\abs{V_t}_1^2
        =\sum_i\E Z_i^2+2\sum_{i<j}\E\abs{Z_i}\E\abs{Z_j}
        =d+\frac{2}{\pi}d(d-1),
\]
which proves \eqref{eq:zz-velocity-moments}.

Under the cold start,
\[
        \bigl(\E\abs{X_0-x_\star}^2\bigr)^{1/2}=\sqrt{d/L}.
\]
Moreover, since
$X_t-x_\star=X_0-x_\star+\int_0^tV_s\dd s$, we have
\[
\begin{aligned}
        \bigl(\E\abs{X_t-x_\star}^2\bigr)^{1/2}
        &\le
        \bigl(\E\abs{X_0-x_\star}^2\bigr)^{1/2}
        +\left(\E\left|\int_0^tV_s\dd s\right|^2\right)^{1/2}\\
        &\le \sqrt{d/L}
        +\int_0^t\bigl(\E\abs{V_s}^2\bigr)^{1/2}\dd s\\
        &=\sqrt{d/L}+\sqrt d\,t,
\end{aligned}
\]
where the last equality uses \eqref{eq:velocity-second-moment}.  This proves
\eqref{eq:position-second-moment} and completes the proof.
\end{proof}

Under the cold start, Lemma~\ref{lem:trajectory} bounds the position and
velocity on every finite horizon by a random variable with moments of all
orders.  This domination also justifies Dynkin’s formula for the unbounded test functions used in the event-count arguments.  We isolate the
required localization in the following technical lemma, which will be invoked
in both sampler-specific proofs below.

\begin{lemma}[Dynkin formula for observables of polynomial growth]
\label{lem:dynkin-polynomial}
Assume \eqref{eq:hessian-bounds} and the cold start
\eqref{eq:cold-start}, and consider either BPS or Zigzag, with generator
$\mathcal L$.  Let $f\in C^1(\mathbb R^{2d})$, and assume that both $f$ and $\mathcal Lf$ have polynomial growth; that is, for some $C<\infty$ and $p\ge0$,
\[
        \abs{f(x,v)}+\abs{\mathcal Lf(x,v)}
        \le C\bigl(1+\abs{x-x_\star}+\abs v\bigr)^p.
\]
Then, for every $T<\infty$,
\[
        \E f(X_T,V_T)-\E f(X_0,V_0)
        =\int_0^T\E[\mathcal Lf(X_t,V_t)]\dd t.
\]
\end{lemma}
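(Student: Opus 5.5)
The plan is to obtain the identity from the pathwise structure of the PDMP rather than from an abstract martingale problem. Between events the process follows the flow $\dot X=V$, $\dot V=0$. At events $f$ jumps. So for $f\in C^1$ we can write
\[
f(X_T,V_T)-f(X_0,V_0)=\int_0^T V_t\cdot\nabla_xf(X_t,V_t)\dd t+\sum_{\text{events } s\le T}\bigl(f(X_s,V_s)-f(X_{s-},V_{s-})\bigr).
\]
This holds almost surely because the number of events on $[0,T]$ is finite. The jumps come from a marked point process: bounces or flips at the state-dependent intensities, and refreshments at constant rate $\gamma$ with independent Gaussian marks. Its compensator is
\[
\int_0^T\Bigl[\lambda(X_t,V_t)\bigl(f(X_t,RV_t)-f(X_t,V_t)\bigr)+\gamma\Bigl(\int f(X_t,w)\varphi(\dd w)-f(X_t,V_t)\Bigr)\Bigr]\dd t.
\]
For Zigzag, the first term is replaced by the sum over coordinates. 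Hence $M_t:=f(X_t,V_t)-f(X_0,V_0)-\int_0^t\mathcal Lf(X_s,V_s)\dd s$ is a local martingale. I would localize it with stopping times $\sigma_n:=\inf\{t:\abs{X_t-x_\star}+\abs{V_t}\ge n\}\wedge T$. Up to $\sigma_n$, the intensities are bounded: $\lambda\le\abs v\,\abs{\nabla U(x)}\le nLn$ by $L$-smoothness and $\nabla U(x_\star)=0$. The integrands are also bounded, so the compensator identity gives $\E M_{\sigma_n}=0$.

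Next I pass to the limit $n\to\infty$. By Lemma~\ref{lem:trajectory},
\[
\sup_{t\le T}\bigl(\abs{X_t-x_\star}+\abs{V_t}\bigr)\le \abs{X_0-x_\star}+(1+T)V_T^{\max}=:Y_T.
\]
Under the cold start \eqref{eq:cold-start}, $\abs{X_0-x_\star}$ is Gaussian-norm distributed, and by \eqref{eq:vmax-moments} $V_T^{\max}$ has all moments. Hence $\E(1+Y_T)^p<\infty$. Since $Y_T<\infty$ almost surely, $\sigma_n=T$ for all large $n$ almost surely. The quantities $f(X_{\sigma_n},V_{\sigma_n})$ and $\int_0^{\sigma_n}\abs{\mathcal Lf}\dd t$ are dominated by $C(1+Y_T)^p$ and $CT(1+Y_T)^p$ respectively. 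Dominated convergence therefore gives
\[
\E f(X_T,V_T)-\E f(X_0,V_0)=\E\int_0^T\mathcal Lf\dd t.
\]
Fubini, again justified by the same integrable bound, moves the expectation inside the time integral.

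The main obstacle is the localization step, specifically justifying $\E M_{\sigma_n}=0$. This requires the compensator of the jump sum to be exactly the jump part of $\mathcal L f$, with integrable total variation up to $\sigma_n$. For the bounce and flip part, the jump sizes $\abs{f(x,R_xv)-f(x,v)}$ are bounded on the ball of radius $n$, because $R_x$ and $F_i$ preserve $\abs v$. For the refreshment part, however, the new velocity $w$ can leave the ball, so $f$ after the jump is not bounded. I would handle this term directly: its expected total variation is at most $\gamma\int_0^T\E\int\abs{f(X_t,w)}\varphi(\dd w)\dd t$. This is finite by polynomial growth together with Gaussian moments of $w$ and the bound on $\abs{X_t-x_\star}$ before $\sigma_n$. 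This integrability makes the compensated refreshment sum a true martingale, so the argument goes through.
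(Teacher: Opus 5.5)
Your proposal is correct and takes the same route as the paper: localize with $\sigma_n$, dominate by $\abs{X_0-x_\star}+(1+T)V_T^{\max}$ using Lemma~\ref{lem:trajectory}, and pass to the limit by dominated convergence. The only difference is that you justify the stopped Dynkin identity through the marked-point-process compensator, including a separate treatment of refreshment jumps that can leave the ball, whereas the paper states that step in a single sentence.
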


\begin{proof}
Define stopping times
\[
        \sigma_n:=\inf\{t\ge0:
        \abs{X_t-x_\star}+\abs{V_t}\ge n\}.
\]
The jump rates are bounded on compact sets, and the Gaussian refreshment kernel preserves integrability of polynomial-growth observables, so the stopped process satisfies
Dynkin's formula:
\[
        \E f(X_{T\wedge\sigma_n},V_{T\wedge\sigma_n})
        -\E f(X_0,V_0)
        =\E\int_0^{T\wedge\sigma_n}
          \mathcal Lf(X_t,V_t)\dd t.
\]
For a fixed horizon $T$, Lemma~\ref{lem:trajectory} gives
\[
        \sup_{0\le t\le T}
        \bigl(\abs{X_t-x_\star}+\abs{V_t}\bigr)
        \le \abs{X_0-x_\star}+(T+1)V_T^{\max}.
\]
The right-hand side has moments of every order, and $\sigma_n>T$ eventually
almost surely.  The assumed polynomial growth bound therefore supplies a common
integrable majorant for both sides of the stopped identity.  Letting
$n\to\infty$ and applying dominated convergence proves the claim.
\end{proof}

\subsection{BPS}\label{ssec:proof-bps}

By Cauchy--Schwarz, controlling the expected number of bounces reduces to
controlling the time integral of the squared bounce rate.  To estimate this
integral using Dynkin's formula, we seek an observable whose generator
contains the squared bounce rate up to a spatial factor.  For an observable
$\psi$, the bounce contribution in Dynkin's formula is
\[
        \lambda^{\bps}(x,v)
        \bigl(\psi(x,R_xv)-\psi(x,v)\bigr),
\]
so the jump of $\psi$ should produce the factor
$\ip{v}{\nabla U(x)}$, since multiplication by $\lambda^{\bps}(x,v)$
then gives $\lambda^{\bps}(x,v)^2$.
For this purpose, we choose
\[
        \psi(x,v):=\ip{x-x_\star}{v}.
\]
When $\nabla U(x)\ne0$, the reflection formula gives
\begin{equation}\label{eq:bps-motivating-jump}
        \psi(x,R_xv)-\psi(x,v)
        =-2\ip{v}{\nabla U(x)}
          \frac{\ip{x-x_\star}{\nabla U(x)}}
               {\abs{\nabla U(x)}^2}.
\end{equation}
We center at $x_\star$ so that the smoothness and convexity assumptions bound
the spatial ratio on the right-hand side uniformly from below.  This
observation is the key input to the following estimate.

\begin{lemma}[Bound on the BPS bounce count from a cold start]
\label{lem:bps-count}
Assume \eqref{eq:hessian-bounds}.
For the cold start \eqref{eq:cold-start} and every $T\ge0$,
\begin{equation}\label{eq:bps-square-rate-bound}
        \int_0^T\E\bigl[\lambda^{\bps}(X_t,V_t)^2\bigr]\dd t
        \;\le\; LdT+\frac{\gamma^{\bps} d}{4}+\frac{\sqrt L\,d}{2}.
\end{equation}
Hence, whenever $T\ge\max\{L^{-1/2},\,\gamma^{\bps}/(4L)\}$, the expected
number of bounces in $[0,T]$ satisfies
\begin{equation}\label{eq:bps-count-clean}
        B^{\bps}(T)
        =\int_0^T\E\bigl[\lambda^{\bps}(X_t,V_t)\bigr]\dd t
        \le 2\sqrt{Ld}\,T .
\end{equation}
\end{lemma}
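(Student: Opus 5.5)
Apply Lemma~\ref{lem:dynkin-polynomial} to the observable $\psi(x,v)=\ip{x-x_\star}{v}$, then use Cauchy--Schwarz to pass from the squared rate to the bounce count. First compute $\mathcal L^{\bps}\psi$:
\[
\mathcal L^{\bps}\psi(x,v)=\abs v^2-2\lambda^{\bps}(x,v)\ip{v}{\nabla U(x)}\frac{\ip{x-x_\star}{\nabla U(x)}}{\abs{\nabla U(x)}^2}-\gamma^{\bps}\psi(x,v).
\]
The refreshment term is $-\gamma^{\bps}\psi$ because $\varphi$ has mean zero. Since $\lambda^{\bps}\ip{v}{\nabla U}=(\lambda^{\bps})^2$, the middle term is $-2(\lambda^{\bps})^2 r(x)$, where $r(x):=\ip{x-x_\star}{\nabla U(x)}/\abs{\nabla U(x)}^2$. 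Strong convexity gives $\ip{x-x_\star}{\nabla U(x)}\ge m\abs{x-x_\star}^2$, and smoothness gives $\abs{\nabla U(x)}\le L\abs{x-x_\star}$. Better, co-coercivity of $L$-smooth convex functions gives $\ip{x-x_\star}{\nabla U(x)}\ge L^{-1}\abs{\nabla U(x)}^2$, so $r(x)\ge 1/L$ whenever $\nabla U(x)\neq0$. When $\nabla U(x)=0$ the bounce rate vanishes, so the term is zero.

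The growth hypotheses of Lemma~\ref{lem:dynkin-polynomial} hold: $\psi$ is quadratic, and $\mathcal L^{\bps}\psi$ is bounded by a polynomial because $(\lambda^{\bps})^2 r\le \abs v^2\abs{\nabla U}^2\cdot \abs{x-x_\star}/\abs{\nabla U}\le L\abs v^2\abs{x-x_\star}$. (Continuity of $r$ is not needed; the localization argument only needs measurability and the growth bound.) Dynkin's formula then gives
\[
\frac{2}{L}\int_0^T\E(\lambda^{\bps})^2\dd t\le\int_0^T\E\abs{V_t}^2\dd t-\gamma^{\bps}\int_0^T\E\psi_t\dd t+\E\psi_0-\E\psi_T.
\]
By \eqref{eq:velocity-second-moment}, the first term on the right equals $dT$. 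The cold start has $X_0$ independent of $V_0$ with mean-zero velocity, so $\E\psi_0=0$.

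The main obstacle is controlling the refreshment drift $-\gamma^{\bps}\E\psi_t$ and the boundary term $-\E\psi_T$, which carry no sign. To handle the drift, note that $\frac{d}{dt}\E\abs{X_t-x_\star}^2=2\E\psi_t$, so
\[
-\gamma^{\bps}\int_0^T\E\psi_t\dd t=\tfrac{\gamma^{\bps}}2\bigl(\E\abs{X_0-x_\star}^2-\E\abs{X_T-x_\star}^2\bigr)\le\frac{\gamma^{\bps}d}{2L}.
\]
For the boundary term, use $-\E\psi_T\le\frac{\sqrt L}{2}\E\abs{X_T-x_\star}^2+\frac{1}{2\sqrt L}\E\abs{V_T}^2$. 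The position part cannot simply be discarded here. I would therefore combine it with the favorable $-\frac{\gamma^{\bps}}2\E\abs{X_T-x_\star}^2$ from the drift identity when $\gamma^{\bps}\ge\sqrt L$. Otherwise I would balance the Young weight, using weight $\gamma^{\bps}$ on the position side and $1/\gamma^{\bps}$ on the velocity side. Either way, after multiplying by $L/2$, the right-hand side becomes $LdT+O(\gamma^{\bps}d)+O(\sqrt L d)$; the constants $1/4$ and $1/2$ in \eqref{eq:bps-square-rate-bound} come from this bookkeeping.

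For \eqref{eq:bps-count-clean}, Cauchy--Schwarz in time and expectation gives $B^{\bps}(T)\le\sqrt{T\int_0^T\E(\lambda^{\bps})^2}$. The hypothesis $T\ge\max\{L^{-1/2},\gamma^{\bps}/(4L)\}$ makes each of the two constant terms at most $LdT$, so the bound is at most $\sqrt{T\cdot 3LdT}\le 2\sqrt{Ld}\,T$.
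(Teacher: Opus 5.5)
\textbf{There is a genuine gap in how you handle the terminal term.}

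Up to the terminal term, your argument matches the paper's. You use the same observable $\psi$ and the same generator identity. You get $r\ge 1/L$ by co-coercivity, which is equivalent to the paper's spectral argument via $\nabla U(x)=A_x(x-x_\star)$. You have the same drift identity giving $-\gamma^{\bps}\int_0^T\E\psi_t\,\dd t\le\gamma^{\bps}d/(2L)$, and the same final Cauchy--Schwarz.

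The gap is your treatment of $-\E\psi_T$ when $\gamma^{\bps}<\sqrt L$. You never use a bound on $\E\abs{X_T-x_\star}^2$. So the most that Young's inequality plus the favorable term $-\frac{\gamma^{\bps}}{2}\E\abs{X_T-x_\star}^2$ can give is
\[
-\E\psi_T-\frac{\gamma^{\bps}}{2}\E\abs{X_T-x_\star}^2
\le\sup_{y\ge0}\Bigl(\sqrt d\,y-\frac{\gamma^{\bps}}{2}y^2\Bigr)=\frac{d}{2\gamma^{\bps}} .
\]
After multiplying by $L/2$ this contributes $Ld/(4\gamma^{\bps})$, not $O(\sqrt L\,d)$, and it blows up as $\gamma^{\bps}\to0$. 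Your claim that both cases give $O(\sqrt L\,d)$ is therefore false in the second case.

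Two consequences follow. First, your bound is weaker than \eqref{eq:bps-square-rate-bound} whenever $\gamma^{\bps}<\sqrt L/2$ and $T$ is small. Second, the count bound also fails: for instance $\gamma^{\bps}=\sqrt L/100$ and $T=L^{-1/2}$ satisfy the hypothesis, but the extra term equals $25LdT$, so your count bound comes out near $5\sqrt{Ld}\,T$ instead of $2\sqrt{Ld}\,T$. This regime is the one the paper needs: it takes $\gamma^{\bps}=\sqrt{dm}$, which lies below $\sqrt L$ whenever $\kappa>d$.

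\textbf{The fix.} The missing ingredient is the cold-start position moment \eqref{eq:position-second-moment}. Use it through Cauchy--Schwarz rather than a fixed-weight Young inequality; a fixed weight would leave a term quadratic in $T$. Drop the favorable term $-\frac{\gamma^{\bps}}{2}\E\abs{X_T-x_\star}^2\le0$ entirely and write
\[
-\E\psi_T\le\sqrt{\E\abs{X_T-x_\star}^2\,\E\abs{V_T}^2}
\le\Bigl(\sqrt{d/L}+\sqrt d\,T\Bigr)\sqrt d=\frac{d}{\sqrt L}+dT .
\]
This gives
\[
\frac{2}{L}\int_0^T\E(\lambda^{\bps})^2\,\dd t\le 2dT+\frac{\gamma^{\bps}d}{2L}+\frac{d}{\sqrt L},
\]
which is exactly \eqref{eq:bps-square-rate-bound} for every $\gamma^{\bps}>0$. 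The extra $dT$ from this crude terminal bound explains why the statement has $LdT$ rather than the $LdT/2$ your argument yields when $\gamma^{\bps}\ge\sqrt L$.

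\textbf{A minor slip.} Your growth bound should read $(\lambda^{\bps})^2r\le\abs v^2\abs{\nabla U(x)}\abs{x-x_\star}\le L\abs v^2\abs{x-x_\star}^2$, not $L\abs v^2\abs{x-x_\star}$. It is still polynomial, so the Dynkin step is unaffected.
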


\begin{proof}
Let us introduce the shorthand
\[
        r(x):=
        \begin{cases}
        \displaystyle
        \frac{\ip{x-x_\star}{\nabla U(x)}}{\abs{\nabla U(x)}^2},
                &\nabla U(x)\ne0,\\[2mm]
        1/L,    &\nabla U(x)=0.
        \end{cases}
\]
With this notation, \eqref{eq:bps-motivating-jump} can be written as
\[
        \psi(x,R_xv)-\psi(x,v)
        =-2\ip{v}{\nabla U(x)}r(x).
\]
Strong convexity implies that $\nabla U(x)=0$ only at $x_\star$, where the
bounce rate vanishes; the assigned value at $x_\star$ is therefore not important.  

We claim that
\begin{equation}\label{eq:bps-psi-generator}
        \mathcal L^{\bps}\psi(x,v)
        =\abs v^2-2\lambda^{\bps}(x,v)^2r(x)
          -\gamma^{\bps}\psi(x,v).
\end{equation}
Indeed, since $\nabla_x\psi(x,v)=v$, the transport contribution in
\eqref{eq:bps-generator} is
\[
        v\cdot\nabla_x\psi(x,v)=\abs v^2.
\]
The bounce term follows from the calculation above. Finally, since $\varphi$ is centered,
\[
        \int_{\R^d}\psi(x,w)\varphi(\dd w)
        =\ip{x-x_\star}{\int_{\R^d}w\varphi(\dd w)}
        =0,
\]
so the refreshment contribution is
$-\gamma^{\bps}\psi(x,v)$.  Combining the three contributions gives
\eqref{eq:bps-psi-generator}.

The identity \eqref{eq:bps-psi-generator} involves
the squared bounce rate weighted by $r$.  To recover the unweighted rate, we
need a uniform lower bound on this factor.  We claim that
\[
        \frac1L\le r(x)\le\frac1m,
        \qquad x\in\R^d.
\]
The claim holds at $x_\star$ by the definition of $r$.  Suppose now that
$x\ne x_\star$.  Since $\nabla U(x_\star)=0$, the fundamental theorem of
calculus gives
\[
        \nabla U(x)=A_x(x-x_\star),
        \qquad
        A_x:=\int_0^1
        \nabla^2U\bigl(x_\star+s(x-x_\star)\bigr)\dd s.
\]
$A_x$ is symmetric and satisfies
$mI_d\preceq A_x\preceq L I_d$ by the bound on Hessian of $U$.  Its spectrum is therefore contained in
$[m,L]$, and hence $mA_x\preceq A_x^2\preceq LA_x$. Thus we arrived at the desired bound for $r$ combined with the observation that 
\[
\begin{aligned}
        r(x)
        =\frac{\ip{x-x_\star}{\nabla U(x)}}
                {\abs{\nabla U(x)}^2}
        =\frac{\ip{x-x_\star}{A_x(x-x_\star)}}
                {\ip{x-x_\star}{A_x^2(x-x_\star)}}.
\end{aligned}
\]

We may now apply Dynkin's formula.  The bounds
$\lambda^{\bps}(x,v)\le L\abs{x-x_\star}\abs v$ and
$r(x)\le1/m$ show from \eqref{eq:bps-psi-generator} that
$\abs\psi+\abs{\mathcal L^{\bps}\psi}$ has polynomial growth.  Therefore
Lemma~\ref{lem:dynkin-polynomial} applies and yields
\[
\begin{aligned}
        \E\psi(X_T,V_T)-\E\psi(X_0,V_0)
        & =\int_0^T
          \E\!\left[\mathcal L^{\bps}\psi(X_t,V_t)\right]\dd t \\
        &=\int_0^T\E\abs{V_t}^2\dd t
          -2\int_0^T\E\!\left[
            \lambda^{\bps}(X_t,V_t)^2r(X_t)
          \right]\dd t\\
        &\quad-\gamma^{\bps}
          \int_0^T\E\psi(X_t,V_t)\dd t,
\end{aligned}
\]
where the second equality uses  \eqref{eq:bps-psi-generator}.
By \eqref{eq:velocity-second-moment}, the first term on the right is $dT$.
Rearranging the preceding identity therefore gives
\begin{equation}\label{eq:dynkin}
\begin{aligned}
        2\int_0^T\E\!\left[
          \lambda^{\bps}(X_t,V_t)^2r(X_t)
        \right]\dd t
        &=dT-\gamma^{\bps}\int_0^T\E\psi(X_t,V_t)\dd t\\
        &\quad-\E\psi(X_T,V_T)+\E\psi(X_0,V_0).
\end{aligned}
\end{equation}
We next eliminate the time integral of $\psi$.  The position is continuous,
and $\dot X_t=V_t$ between events.  Therefore, for almost every $t$,
\[
        \frac{\dd}{\dd t}\abs{X_t-x_\star}^2
        =2\ip{X_t-x_\star}{V_t}
        =2\psi(X_t,V_t).
\]
Integration over $[0,T]$ and taking expectation therefore gives 
\[
        \int_0^T\E\psi(X_t,V_t)\dd t
        =\frac12\left(
          \E\abs{X_T-x_\star}^2-\E\abs{X_0-x_\star}^2
        \right).
\]
Consequently, the time-integral term in the rearranged Dynkin identity is
\[
        -\gamma^{\bps}\int_0^T\E\psi(X_t,V_t)\dd t
        =-\frac{\gamma^{\bps}}2
          \left(\E\abs{X_T-x_\star}^2-\E\abs{X_0-x_\star}^2\right).
\]
We now estimate this term together with the two endpoint terms on the right hand side of 
identity \eqref{eq:dynkin}. Under the cold start, $X_0-x_\star$ and $V_0$ are independent and centered, and hence
\[
        \E\psi(X_0,V_0)
        =\E\ip{X_0-x_\star}{V_0}=0,
        \qquad
        \E\abs{X_0-x_\star}^2=\frac dL.
\]
Since $\E\abs{X_T-x_\star}^2\ge0$, it follows that
\[
        -\gamma^{\bps}\int_0^T\E\psi(X_t,V_t)\dd t
        \le\frac{\gamma^{\bps}d}{2L}.
\]
For the remaining endpoint term, Cauchy--Schwarz,
\eqref{eq:velocity-second-moment}, and
\eqref{eq:position-second-moment} give
\[
\begin{aligned}
        -\E\ip{X_T-x_\star}{V_T}
        &\le
        \left|\E\ip{X_T-x_\star}{V_T}\right|\\
        &\le
        \sqrt{\E\abs{X_T-x_\star}^2\,\E\abs{V_T}^2}
        \le\left(\sqrt{\frac dL}+\sqrt d\,T\right)\sqrt d =\frac d{\sqrt L}+dT.
\end{aligned}
\]
Inserting these three bounds into \eqref{eq:dynkin} yields
\[
\begin{aligned}
        2\int_0^T\E\!\left[
          \lambda^{\bps}(X_t,V_t)^2r(X_t)
        \right]\dd t
        &\le 2dT+\frac{\gamma^{\bps}d}{2L}+\frac d{\sqrt L},
\end{aligned}
\]
and the lower bound $r\ge1/L$ then gives
\[
        \int_0^T
        \E\bigl[\lambda^{\bps}(X_t,V_t)^2\bigr]\dd t
        \le LdT+\frac{\gamma^{\bps}d}{4}+\frac{\sqrt L\,d}{2},
\]
which proves \eqref{eq:bps-square-rate-bound}.

It remains to deduce the bounce-count estimate. Cauchy--Schwarz gives
\[
        B^{\bps}(T)
        =\int_0^T\E\lambda^{\bps}(X_t,V_t)\dd t
        \le\left(
          T\int_0^T\E[\lambda^{\bps}(X_t,V_t)^2]\dd t
        \right)^{1/2}.
\]
If $T\ge L^{-1/2}$ and $T\ge\gamma^{\bps}/(4L)$, then
\[
        \frac{\sqrt L\,d}{2}\le\frac12LdT,
        \qquad
        \frac{\gamma^{\bps}d}{4}\le LdT.
\]
Thus \eqref{eq:bps-square-rate-bound} implies
\[
        \int_0^T\E[\lambda^{\bps}(X_t,V_t)^2]\dd t
        \le\frac52LdT.
\]
Substitution into the Cauchy--Schwarz bound gives
\[
        B^{\bps}(T)
        \le\sqrt{\frac52}\,\sqrt{Ld}\,T
        \le2\sqrt{Ld}\,T,
\]
which proves \eqref{eq:bps-count-clean}.
\end{proof}

The bounce count controls the accepted proposals.  It remains to count the
window anchors and the rejected thinning proposals.

\begin{prop}[BPS windowed thinning: exactness and query count]
\label{prop:bps-thinning}
Assume \eqref{eq:hessian-bounds} and let $T,\tau>0$.
Run Algorithm~\ref{alg:bps} with refreshment rate $\gamma^{\bps}>0$ and a
possibly random initial state $(X_0,V_0)$ satisfying
$\abs{X_0}+\abs{V_0}<\infty$ almost surely.

Then Algorithm~\ref{alg:bps} is well defined and exactly simulates the BPS
process initialized at $(X_0,V_0)$ with refreshment rate $\gamma^{\bps}$.
Denote the process generated by the algorithm by $(X_t,V_t)$. It satisfies
\begin{equation}\label{eq:bps-thinning-general}
Q_\nabla(T)
\le \frac{T}{\tau}+1+B^{\bps}(T)
+L\tau T\sup_{0\le t\le T}\E\abs{V_t}^2.
\end{equation}
For the cold start \eqref{eq:cold-start}, choosing
$\tau=(Ld)^{-1/2}$ gives
\begin{equation}\label{eq:bps-thinning-cold}
Q_\nabla(T)\le 5\sqrt{Ld}\,T
\end{equation}
whenever
$T\ge\max\{L^{-1/2},\gamma^{\bps}/(4L)\}$.
\end{prop}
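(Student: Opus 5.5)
The proof has three parts: exactness of the simulation, a decomposition of the query count, and the cold-start specialization.

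\emph{Exactness.} On each window $[t_k,t_{k+1})$, condition on the past up to $t_k$. The envelope $\bar\lambda_s$ in \eqref{eq:bps-envelope} is then a predictable, piecewise affine process that dominates $\lambda^{\bps}(X_{s-},V_{s-})$; this is the displayed envelope inequality. We superpose three mechanisms: a proposal clock of rate $\bar\lambda_s$, an independent refreshment clock of rate $\gamma^{\bps}$, and the deterministic window boundaries. Standard Poisson thinning with acceptance probability $\lambda^{\bps}/\bar\lambda$ produces accepted bounces with intensity $\lambda^{\bps}(X_{s-},V_{s-})$. The memorylessness of the exponential clocks justifies restarting them after every event and at every boundary, since the compensators agree with those of the BPS generator \eqref{eq:bps-generator}. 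The algorithm is well defined because proposals occur only where $\bar\lambda>0$. To rule out accumulation of proposals, note that Lemma~\ref{lem:trajectory} gives $\bar\lambda_s\le L(\abs{X_s-x_\star}+2TV_T^{\max})V_T^{\max}$, which is finite a.s. Hence only finitely many proposals occur on $[0,T]$, and the number of windows is $\lceil T/\tau\rceil$.

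\emph{Query count.} Queries come in three kinds: anchors, at most $T/\tau+1$; accepted proposals; and rejected proposals. Each costs exactly one gradient. By the compensator of the proposal process, the expected number of proposals is $\E\int_0^T\bar\lambda_s\dd s$. Split $\bar\lambda_s=\lambda^{\bps}(X_{s-},V_{s-})+(\bar\lambda_s-\lambda^{\bps})$. The first part integrates to $B^{\bps}(T)$. The gap is at most
\[
\pos{\ip{V}{G_k}}-\pos{\ip{V}{\nabla U(X)}}+L\abs V D_s\le 2L\abs{V_{s-}}D_s.
\]
A sharper route avoids the factor $2$: since the rejection ratio concerns the difference of positive parts, one can use $\pos{a}\le\pos{b}+\abs{a-b}$ together with $\abs{a-b}\le L\abs V D_s$. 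Cases should be checked so that the total gap is at most $2L\abs VD_s$, or bounded in expectation by $L\tau\,\E\abs V^2$ per unit time. Within a window,
\[
\abs{V_s}D_s\le\int_{t_k}^s\tfrac12(\abs{V_s}^2+\abs{V_r}^2)\dd r .
\]
Taking expectations and using Fubini, this is at most $\tau\sup_t\E\abs{V_t}^2$ per unit time, up to a factor $\tfrac12$ per term. Integrating over $[0,T]$ yields $L\tau T\sup\E\abs V^2$, which is \eqref{eq:bps-thinning-general}. The main obstacle is making the constant come out as $1$ rather than $2$. I would first try bounding the rejection rate $\bar\lambda_s-\lambda^{\bps}_s$ directly by $2L\abs{V}D_s$ and then using the $\tfrac12$ from the AM-GM step, which gives exactly $L\tau T\sup\E\abs V^2$. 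Because of the refreshments, $\E\abs{V_s}\abs{V_r}\le\sup\E\abs V^2$ still holds by Cauchy--Schwarz, so this step is routine. Integrability follows from Lemma~\ref{lem:trajectory}.

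\emph{Cold start.} By \eqref{eq:velocity-second-moment}, $\sup\E\abs{V_t}^2=d$, and by Lemma~\ref{lem:bps-count}, $B^{\bps}(T)\le2\sqrt{Ld}T$. With $\tau=(Ld)^{-1/2}$ the bound becomes
\[
\sqrt{Ld}\,T+1+2\sqrt{Ld}\,T+\sqrt{Ld}\,T .
\]
Finally $1\le\sqrt{L}T\le\sqrt{Ld}T$ when $T\ge L^{-1/2}$, which gives \eqref{eq:bps-thinning-cold}.
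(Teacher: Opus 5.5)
Your overall structure is the paper's. You use envelope domination plus an almost surely finite bound on $\bar\lambda$ for nonexplosion. You count anchors plus $\E\int_0^T\bar\lambda_s\dd s$, split into $B^{\bps}(T)$ and a gap bounded by $2L\abs{V_{s-}}D_s$, and then substitute the cold-start moments. The step that fails is the one meant to make the coefficient of $L\tau T\sup_t\E\abs{V_t}^2$ in \eqref{eq:bps-thinning-general} equal to $1$. The AM--GM inequality $\abs{V_{s-}}\abs{V_r}\le\frac12(\abs{V_{s-}}^2+\abs{V_r}^2)$ has no spare factor $\frac12$, because the $\frac12$ is used up averaging the two squares. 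After taking expectations it gives $\E[\abs{V_{s-}}\abs{V_r}]\le\sup_t\E\abs{V_t}^2$, exactly what Cauchy--Schwarz gives. Your estimates therefore only produce $\E[\bar\lambda_s-\lambda^{\bps}(X_{s-},V_{s-})]\le2L(s-t_k)\sup_t\E\abs{V_t}^2$. If you then bound this by $2L\tau\sup_t\E\abs{V_t}^2$ per unit time and integrate over $[0,T]$, you get $2L\tau T\sup_t\E\abs{V_t}^2$. In the cold start the total is then $5\sqrt{Ld}\,T+1$, so \eqref{eq:bps-thinning-cold} does not follow. Your ``sharper route'' does not help either. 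The inequality $\pos{a}\le\pos{b}+\abs{a-b}$ bounds the difference of positive parts by $L\abs{V_{s-}}D_s$, and the envelope's correction term adds another $L\abs{V_{s-}}D_s$, so you are back at $2L\abs{V_{s-}}D_s$. Likewise, nothing you have shown gives an expected gap of at most $L\tau\E\abs V^2$ per unit time; pointwise, your bound reaches $2L\tau\sup_t\E\abs{V_t}^2$ at the end of a window.

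The missing observation is that the factor $\frac12$ comes from the window structure: $D_s$ is reset to $0$ at each anchor and grows at most linearly in $s-t_k$. Keep the time-dependent bound $2L(s-t_k)\sup_t\E\abs{V_t}^2$ rather than replacing $s-t_k$ by $\tau$. Integrating it over each nonempty window of length $\ell_k\le\tau$ gives $2L\sup_t\E\abs{V_t}^2\int_0^{\ell_k}s\dd s=L\ell_k^2\sup_t\E\abs{V_t}^2$. Summing with $\sum_k\ell_k^2\le\tau\sum_k\ell_k=\tau T$ yields exactly $L\tau T\sup_t\E\abs{V_t}^2$. This is the paper's argument, and with it your cold-start arithmetic $4\sqrt{Ld}\,T+1\le5\sqrt{Ld}\,T$ goes through.
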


\begin{proof}
The calculation preceding \eqref{eq:bps-envelope} shows that the envelope
dominates the BPS rate throughout each window.
Let us show that the proposal process is nonexplosive.  Since bounces
preserve the speed and only refreshments can change it, for $t\le T$,
\[
        \abs{V_{t-}}\le V_T^{\max},
        \qquad
        D_t\le T V_T^{\max},
        \qquad
        \abs{X_{t_k}-x_\star}
        \le\abs{X_0-x_\star}+T V_T^{\max}.
\]
Using $\abs{\nabla U(x)}\le L\abs{x-x_\star}$, we therefore obtain
\[
        \abs{G_k}
        =\abs{\nabla U(X_{t_k})}
        \le L\bigl(\abs{X_0-x_\star}+T V_T^{\max}\bigr).
\]
Substituting these bounds into \eqref{eq:bps-envelope} yields
\[
\begin{aligned}
        \bar\lambda_t
        &\le \abs{V_{t-}}\abs{G_k}
             +L\abs{V_{t-}}D_t\\
        &\le LV_T^{\max}
             \bigl(\abs{X_0-x_\star}+2TV_T^{\max}\bigr)<\infty.
\end{aligned}
\]
Thus the proposal intensity is bounded on $[0,T]$, which proves
nonexplosion, and thus the simulation is well-defined and exact.

We next count its gradient queries.  There are at most $T/\tau+1$ nonempty
windows and hence at most that many anchor queries.  Since every proposal
requires one further gradient query, we have
\[
\begin{aligned}
        Q_\nabla(T)
        &\le\frac{T}{\tau}+1+\int_0^T\E\bar\lambda_t\dd t\\
        &=\frac{T}{\tau}+1+B^{\bps}(T)
          +\int_0^T\E\!\left[
            \bar\lambda_t-\lambda^{\bps}(X_{t-},V_{t-})
          \right]\dd t.
\end{aligned}
\]
The last integral is the expected number of rejected proposals.
To control this term, we compare the anchored part of the envelope with the
true bounce rate.  Since $\pos{\cdot}$ is $1$-Lipschitz,
\[
\begin{aligned}
        0\le{}
        \bar\lambda_t-\lambda^{\bps}(X_{t-},V_{t-})
        &=\pos{\ip{V_{t-}}{G_k}}
          -\pos{\ip{V_{t-}}{\nabla U(X_{t-})}}
          +L\abs{V_{t-}}D_t\\
        &\le
          \abs{\ip{V_{t-}}{G_k-\nabla U(X_{t-})}}
          +L\abs{V_{t-}}D_t\\
        &\le2L\abs{V_{t-}}D_t.
\end{aligned}
\]
The last inequality uses
$\abs{G_k-\nabla U(X_{t-})}\le L D_t$ following the smoothness of $U$.
For $t\in[t_k,t_{k+1})$, write $s=t-t_k$.  Since
$D_t=\int_{t_k}^t\abs{V_u}\dd u$,
\[
\begin{aligned}
        \E[\abs{V_{t-}}D_t]
        &=\int_{t_k}^t\E[\abs{V_{t-}}\abs{V_u}]\dd u\\
        &\le\int_{t_k}^t
          \sqrt{\E\abs{V_{t-}}^2\,\E\abs{V_u}^2}\dd u\\
        &\le s\sup_{0\le u\le T}\E\abs{V_u}^2.
\end{aligned}
\]
If the $k$th nonempty window has length $\ell_k\le\tau$, its contribution to
the rejected-proposal count is therefore at most
\[
\begin{aligned}
        \int_{t_k}^{t_k+\ell_k}
        \E\!\left[
          \bar\lambda_t-\lambda^{\bps}(X_{t-},V_{t-})
        \right]\dd t
        &\le2L\left(\sup_{0\le u\le T}\E\abs{V_u}^2\right)
          \int_0^{\ell_k}s\dd s\\
        &=L\ell_k^2
          \sup_{0\le u\le T}\E\abs{V_u}^2.
\end{aligned}
\]
Summing over the windows and using
$\sum_k\ell_k^2\le\tau\sum_k\ell_k=\tau T$ gives
\[
        \int_0^T\E\!\left[
          \bar\lambda_t-\lambda^{\bps}(X_{t-},V_{t-})
        \right]\dd t
        \le L\tau T\sup_{0\le u\le T}\E\abs{V_u}^2.
\]
Substitution into the query decomposition proves
\eqref{eq:bps-thinning-general}.

For the cold start, \eqref{eq:velocity-second-moment} gives
$\sup_{0\le t\le T}\E\abs{V_t}^2=d$, while
Lemma~\ref{lem:bps-count} gives
$B^{\bps}(T)\le2\sqrt{Ld}\,T$ on the stated horizon.  With
$\tau=(Ld)^{-1/2}$, \eqref{eq:bps-thinning-general} becomes
\[
\begin{aligned}
        Q_\nabla(T)
        &\le\sqrt{Ld}\,T+1
          +2\sqrt{Ld}\,T
          +L(Ld)^{-1/2}Td\\
        &=4\sqrt{Ld}\,T+1
        \le5\sqrt{Ld}\,T.
\end{aligned}
\]
This proves \eqref{eq:bps-thinning-cold}.
\end{proof}

\begin{proof}[Proof of Theorem~\ref{thm:main}]
The cold start satisfies the initial-state assumption in
Proposition~\ref{prop:bps-thinning}, and \eqref{eq:bps-horizon} includes the
lower bounds required for \eqref{eq:bps-thinning-cold}.  The proposition
therefore shows that
Algorithm~\ref{alg:bps} is well defined on
$[0,\widehat T_\varepsilon^{\bps}]$, simulates BPS exactly, and satisfies
\eqref{eq:bps-thinning-cold}.

The choice of $\widehat T_\varepsilon^{\bps}$  and Theorem~\ref{thm:LW} give
\[
\begin{aligned}
&\chi^2\!\left(
  \Law(X_{\widehat T_\varepsilon^{\bps}},
       V_{\widehat T_\varepsilon^{\bps}})
  \,\middle\Vert\,\rho_\infty\right)\\
&\quad\le
K^{\bps}
\exp\!\left(
  -\frac{1}{K^{\bps}}\sqrt{\frac md}\,
   \widehat T_\varepsilon^{\bps}
\right)
\chi^2(\rho_0\Vert\rho_\infty) \le4\varepsilon^2,
\end{aligned}
\]
which gives the total-variation error $\varepsilon$ by Cauchy–Schwarz inequality.

By \eqref{eq:cold-start-chi2},
\[
\begin{aligned}
\log\!\left(
  1+
  \frac{\sqrt{K^{\bps}\chi^2(\rho_0\Vert\rho_\infty)}}
       {2\varepsilon}
\right)
&\le
\log\!\left(
  1+\frac{\sqrt{K^{\bps}}\,\kappa^{d/4}}{2\varepsilon}
\right)\\
&\le C\left(d\log\kappa+\log\frac1\varepsilon\right).
\end{aligned}
\]
The other two entries in \eqref{eq:bps-horizon} are also bounded by
$\sqrt{d/m}$, since $L^{-1/2}\le m^{-1/2}$ and
$\gamma^{\bps}/(4L)\le\kappa^{-1}\sqrt{d/m}$.
Therefore, absorbing some constants,
\[
        \widehat T_\varepsilon^{\bps}
        =O\!\left(\sqrt{\frac dm}\,
          \left(d\log\kappa+\log\frac1\varepsilon\right)\right).
\]
Since
$\sqrt{Ld}\sqrt{d/m}=d\sqrt\kappa$, the query estimate becomes
\[
        Q_\nabla(\widehat T_\varepsilon^{\bps})
        \le5\sqrt{Ld}\,\widehat T_\varepsilon^{\bps}
        =O\!\left(
          \kappa^{1/2}d
          \left(d\log\kappa+\log\frac1\varepsilon\right)\right),
\]
which proves \eqref{eq:main-cold}.
\end{proof}

\subsection{Zigzag}\label{ssec:proof-zz}
We follow a similar strategy as the analysis of BPS, while in the case of Zigzag, we would need to control the quadratic aggregated flip rate:
\[
        S(x,v):=\sum_{i=1}^d\lambda_i^{\zz}(x,v)^2.
\]
To estimate its time integral using Dynkin's formula, we choose the observable
\[
        q(x,v):=v\cdot\nabla U(x).
\]
A coordinate flip changes the corresponding summand of $q$ so that the jump
part of its generator produces $S$.  The remaining terms are controlled by
the Hessian bound and the moment estimates from
Lemma~\ref{lem:trajectory}.

\begin{lemma}[Bound on the Zigzag flip count from a cold start]
\label{lem:zz-count}
Assume \eqref{eq:hessian-bounds}.
Consider Zigzag initialized from the cold start \eqref{eq:cold-start}, with
arbitrary refreshment rate $\gamma^{\zz}>0$.  Then, for every $T\ge0$,
\begin{equation}\label{eq:zz-square-rate-bound}
        \int_0^T\E S(X_t,V_t)\dd t
        \le LdT+\frac{\gamma^{\zz}d}{4}+\frac{\sqrt L\,d}{2},
\end{equation}
and hence, whenever
$T\ge\max\{L^{-1/2},\gamma^{\zz}/(4L)\}$, the expected total number of flips
satisfies
\begin{equation}\label{eq:zz-count-clean}
        B^{\zz}(T)
        =\int_0^T\E\Lambda^{\zz}(X_t,V_t)\dd t
        \le2d\sqrt L\,T .
\end{equation}
\end{lemma}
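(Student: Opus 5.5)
The plan mirrors the BPS argument of Lemma~\ref{lem:bps-count}: derive a Dynkin identity for the observable $q(x,v)=v\cdot\nabla U(x)$ in which $S$ appears with a constant coefficient, then bound the remaining terms by moment estimates under the cold start. For Zigzag no spatial weight like $r(x)$ appears, so no convexity lower bound of that type is needed.

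\textbf{Step 1: the generator of $q$.} I will compute $\mathcal L^{\zz}q$ term by term.

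The transport term is
\[
v\cdot\nabla_x q(x,v)=\ip{v}{\nabla^2U(x)v}\le L\abs v^2 .
\]
For the flip of coordinate $i$, $q(x,F_iv)-q(x,v)=-2v_i\partial_iU(x)$. Since $a_+\cdot a=a_+^2$, multiplying by $\lambda_i^{\zz}=\pos{v_i\partial_iU(x)}$ gives $-2\lambda_i^{\zz}(x,v)^2$. Summing over $i$, the jump part equals $-2S(x,v)$.

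For refreshment, $\varphi$ is centered, so the contribution is $-\gamma^{\zz}q(x,v)$. Hence
\[
\mathcal L^{\zz}q=\ip{v}{\nabla^2U(x)v}-2S-\gamma^{\zz}q .
\]

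\textbf{Step 2: applying Lemma~\ref{lem:dynkin-polynomial}.} The hypotheses hold for the following reasons.
\begin{itemize}
\item $q$ is $C^1$ because $U\in C^2$.
\item $\abs q\le L\abs{x-x_\star}\abs v$.
\item $S\le L^2\abs{x-x_\star}^2\abs v^2$.
\item $\ip{v}{\nabla^2U(x)v}\le L\abs v^2$.
\end{itemize}
So $q$ and $\mathcal L^{\zz}q$ have polynomial growth. Rearranging the resulting identity gives
\[
2\int_0^T\E S\,\dd t\le L\int_0^T\E\abs{V_t}^2\dd t-\gamma^{\zz}\int_0^T\E q(X_t,V_t)\dd t-\E q(X_T,V_T)+\E q(X_0,V_0).
\]
I bound the four terms on the right in turn.
\begin{itemize}
\item By \eqref{eq:velocity-second-moment}, the first term equals $LdT$.
\item For the time integral of $q$, note $\frac{\dd}{\dd t}U(X_t)=q(X_t,V_t)$ for almost every $t$. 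So $\int_0^T\E q\,\dd t=\E U(X_T)-\E U(X_0)$. Since $U(X_T)\ge U(x_\star)$ and $U(X_0)-U(x_\star)\le\frac L2\abs{X_0-x_\star}^2$, this is at least $-\frac L2\cdot\frac dL=-\frac d2$. Hence the refreshment term is at most $\gamma^{\zz}d/2$. This replaces the $\abs{X_t-x_\star}^2$ trick used for BPS.
\item $\E q(X_0,V_0)=0$ because $X_0$ and $V_0$ are independent and $V_0$ is centered.
\item By Cauchy--Schwarz, $\abs{\nabla U(x)}\le L\abs{x-x_\star}$, \eqref{eq:velocity-second-moment} and \eqref{eq:position-second-moment},
\[
-\E q(X_T,V_T)\le L\bigl(\sqrt{d/L}+\sqrt d\,T\bigr)\sqrt d=\sqrt L\,d+LdT .
\]
\end{itemize}
Summing, $2\int_0^T\E S\,\dd t\le 2LdT+\gamma^{\zz}d/2+\sqrt L\,d$. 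Dividing by $2$ gives \eqref{eq:zz-square-rate-bound}.

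\textbf{Step 3: the flip count.} Cauchy--Schwarz over the coordinates gives $\Lambda^{\zz}\le\sqrt{dS}$. Then Cauchy--Schwarz in time and expectation gives
\[
B^{\zz}(T)\le\Bigl(dT\int_0^T\E S\,\dd t\Bigr)^{1/2}.
\]
When $T\ge\max\{L^{-1/2},\gamma^{\zz}/(4L)\}$, exactly as in the BPS case, the right side of \eqref{eq:zz-square-rate-bound} is at most $\frac52LdT$. Therefore
\[
B^{\zz}(T)\le\sqrt{5/2}\,d\sqrt L\,T\le2d\sqrt L\,T .
\]

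\textbf{Main obstacle.} The main point to handle carefully is the refreshment term $-\gamma^{\zz}\int\E q$, which has no sign a priori. Recognising it as the time derivative of $U(X_t)$, and bounding it below using only $U\ge U(x_\star)$ and $L$-smoothness at the cold start, is what keeps it of order $\gamma^{\zz}d$ uniformly in $T$. A secondary point is justifying that $\frac{\dd}{\dd t}U(X_t)=q$ almost everywhere. This holds because $X$ is continuous and piecewise linear with finitely many events on $[0,T]$, and the expectation exchange is justified by the moment bounds of Lemma~\ref{lem:trajectory}.
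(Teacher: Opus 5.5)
Your proposal is correct and follows the paper's proof essentially step for step. You use the same observable $q=v\cdot\nabla U$ and the same generator identity, rewrite the refreshment term as $\E[U(X_T)-U(X_0)]\ge -d/2$, apply the same Cauchy--Schwarz bound to the terminal term, and use $(\Lambda^{\zz})^2\le dS$ to conclude $B^{\zz}(T)\le\sqrt{5/2}\,d\sqrt L\,T$.
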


\begin{proof}
We first compute the generator of $q$:
\begin{equation}\label{eq:zz-q-generator}
        \mathcal L^{\zz}q(x,v)
        =v^\top\nabla^2U(x)v-2S(x,v)-\gamma^{\zz}q(x,v).
\end{equation}
Indeed, the transport contribution is $v^\top\nabla^2U(x)v$.  For a flip of
coordinate $i$,
\[
        q(x,F_iv)-q(x,v)=-2v_i\partial_iU(x),
        \qquad
        \lambda_i^{\zz}(x,v)v_i\partial_iU(x)
        =\lambda_i^{\zz}(x,v)^2,
\]
so summing the flip terms gives $-2S(x,v)$.  The Gaussian velocity law is
centered, and hence the refreshment term is $-\gamma^{\zz}q(x,v)$.  This
proves \eqref{eq:zz-q-generator}.

Since $\nabla U(x_\star)=0$, the $L$-smoothness and mean-value theorem give
$\abs{\nabla U(x)}\le L\abs{x-x_\star}$.  Consequently,
\[
\begin{aligned}
        \abs{q(x,v)}
        &\le\abs v\,\abs{\nabla U(x)}
        \le L\abs{x-x_\star}\abs v,\\
        S(x,v)
        &\le\sum_{i=1}^d v_i^2\bigl(\partial_iU(x)\bigr)^2
        \le\abs v^2\abs{\nabla U(x)}^2
        \le L^2\abs{x-x_\star}^2\abs v^2,\\
        \abs{v^\top\nabla^2U(x)v}
        &\le\|\nabla^2U(x)\|_{\mathrm{op}}\abs v^2
        \le L\abs v^2.
\end{aligned}
\]
These estimates show that $q$ and $\mathcal L^{\zz}q$ have polynomial growth.
We may thus
apply Lemma~\ref{lem:dynkin-polynomial}.  Using
\eqref{eq:zz-q-generator}, Dynkin's formula gives
\[
\begin{aligned}
        \E q(X_T,V_T)-\E q(X_0,V_0)
        &=\int_0^T
          \E\bigl[V_t^\top\nabla^2U(X_t)V_t\bigr]\dd t\\
        &\quad-2\int_0^T\E S(X_t,V_t)\dd t
          -\gamma^{\zz}\int_0^T\E q(X_t,V_t)\dd t.
\end{aligned}
\]

To express the last integral in terms of the potential, note that $X_t$ is
continuous and $\dot X_t=V_t$ for almost every $t$.  The chain rule along
each trajectory gives
\[
        U(X_T)-U(X_0)
        =\int_0^T\nabla U(X_t)\cdot V_t\dd t
        =\int_0^Tq(X_t,V_t)\dd t.
\]
Substituting this identity into Dynkin's
formula and rearranging yields
\[
\begin{aligned}
        2\int_0^T\E S(X_t,V_t)\dd t
        &=\int_0^T\E\bigl[V_t^\top\nabla^2U(X_t)V_t\bigr]\dd t \\
        &\quad-\gamma^{\zz}\E\bigl[U(X_T)-U(X_0)\bigr]
              -\E q(X_T,V_T)+\E q(X_0,V_0).
\end{aligned}
\]

Since $V_0$ is centered and independent of $X_0$,
$\E q(X_0,V_0)=0$.  The Hessian bound and
\eqref{eq:velocity-second-moment} imply
\[
        \int_0^T\E\bigl[V_t^\top\nabla^2U(X_t)V_t\bigr]\dd t
        \le L\int_0^T\E\abs{V_t}^2\dd t
        =LdT.
\]
Since $U(X_T)\ge U(x_\star)$, the upper Hessian bound and the cold start give
\[
\begin{aligned}
        -\E[U(X_T)-U(X_0)]
        &\le\E[U(X_0)-U(x_\star)]
        \le\frac{L}{2}
             \E\abs{X_0-x_\star}^2
        =\frac{d}{2}.
\end{aligned}
\]
For the terminal term, Lemma~\ref{lem:trajectory} yields
\[
\begin{aligned}
        -\E q(X_T,V_T)
        &\le\left|\E q(X_T,V_T)\right|\\
        &\le L
          \sqrt{\E\abs{X_T-x_\star}^2\,\E\abs{V_T}^2}\\
        &\le L\left(\sqrt{\frac dL}+\sqrt d\,T\right)\sqrt d=\sqrt L\,d+LdT.
\end{aligned}
\]
Inserting these bounds into the Dynkin identity gives
\[
        2\int_0^T\E S(X_t,V_t)\dd t
        \le 2LdT+\frac{\gamma^{\zz}d}{2}+\sqrt L\,d,
\]
which proves \eqref{eq:zz-square-rate-bound}.

Finally, $(\Lambda^{\zz})^2\le dS$, so Cauchy--Schwarz on
$[0,T]\times\Omega$ gives
\[
\begin{aligned}
        B^{\zz}(T)
        &\le\left(
          T\int_0^T\E[\Lambda^{\zz}(X_t,V_t)^2]\dd t
        \right)^{1/2}\\
        &\le\left(
          dT\int_0^T\E S(X_t,V_t)\dd t
        \right)^{1/2}.
\end{aligned}
\]
Under the stated lower bound on $T$, the last two terms in
\eqref{eq:zz-square-rate-bound} are at most $LdT$ and $LdT/2$,
respectively.  Hence
\[
        \int_0^T\E S(X_t,V_t)\dd t\le\frac52LdT,
        \qquad
        B^{\zz}(T)
        \le\sqrt{\frac52}\,d\sqrt L\,T
        \le2d\sqrt L\,T,
\]
which proves \eqref{eq:zz-count-clean}.
\end{proof}

The flip count controls the accepted proposals.  It remains to count the
window anchors and the rejected coordinate proposals.

\begin{prop}[Zigzag windowed thinning: exactness and query count]
\label{prop:zz-thinning}
Assume \eqref{eq:hessian-bounds} and let $T,\tau>0$.
Run Algorithm~\ref{alg:zz} with refreshment rate $\gamma^{\zz}>0$ and a
possibly random initial state $(X_0,V_0)$ satisfying  $\abs{X_0}+\abs{V_0}<\infty$ almost surely.

Then Algorithm~\ref{alg:zz} is well defined and exactly simulates the Zigzag
process initialized at $(X_0,V_0)$ with refreshment rate $\gamma^{\zz}$.
Denote the process generated by the algorithm by $(X_t,V_t)$. It satisfies
\begin{equation}\label{eq:zz-thinning-general}
Q_\partial(T)
\le d\left(\frac{T}{\tau}+1\right)
+B^{\zz}(T)
+L\tau T
\left(\sup_{0\le t\le T}\E\abs{V_t}_1^2\right)^{1/2}
\left(\sup_{0\le t\le T}\E\abs{V_t}^2\right)^{1/2}.
\end{equation}
If $V_0\sim N(0,I_d)$, then
\begin{equation}\label{eq:zz-thinning-equivalent}
Q_{\rm eq}(T)
\le\frac{T}{\tau}+1+\frac{B^{\zz}(T)}{d}
+L\sqrt d\,\tau T.
\end{equation}

For the cold start \eqref{eq:cold-start}, setting
$\gamma^{\zz}=\sqrt L$ and choosing
$\tau=L^{-1/2}d^{-1/4}$ gives
\begin{equation}\label{eq:zz-thinning-cold}
Q_{\rm eq}(T)\le5\sqrt L\,d^{1/4}T
\end{equation}
whenever $T\ge L^{-1/2}$.
\end{prop}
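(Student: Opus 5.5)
We follow the proof of Proposition~\ref{prop:bps-thinning} step by step, adapting each step to the coordinatewise envelope \eqref{eq:zz-envelope}.

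\emph{Exactness and well-posedness.} Each coordinate envelope dominates its true rate, as shown before Algorithm~\ref{alg:zz}. Selecting coordinate $i$ with probability $\bar\lambda_{i,t}/\bar\Lambda_t$ and then accepting with probability $\lambda_i^{\zz}/\bar\lambda_{i,t}$ is superposition plus thinning of $d$ independent Poisson clocks. The accepted coordinate-$i$ events therefore have intensity $\lambda_i^{\zz}(X_{t-},V_{t-})$, which is the Zigzag law. For nonexplosion we use Lemma~\ref{lem:trajectory}, which gives $\abs{V_{t-}}\le V_T^{\max}$, $D_t\le TV_T^{\max}$ and $\abs{G_k}\le L(\abs{X_0-x_\star}+TV_T^{\max})$. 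Since $\sum_i\abs{V_{i}}\abs{G_{k,i}}\le\abs V\abs{G_k}$ and $\sum_i\abs{V_i}\le\sqrt d\abs V$, the total envelope $\bar\Lambda_t$ is bounded by an almost surely finite random constant on $[0,T]$.

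\emph{Query count.} Each window costs $d$ coordinate queries for its anchor, giving $d(T/\tau+1)$ in total. Each proposal costs one further partial query, so the proposal cost is $\int_0^T\E\bar\Lambda_t\dd t=B^{\zz}(T)+\int_0^T\E[\bar\Lambda_t-\Lambda^{\zz}]\dd t$. Since $(\cdot)_+$ is $1$-Lipschitz and $\abs{G_{k,i}-\partial_iU(X_{t-})}\le LD_t$, we get
\[
\bar\lambda_{i,t}-\lambda_i^{\zz}\le 2L\abs{V_{i,t-}}D_t,
\qquad\text{hence}\qquad
\bar\Lambda_t-\Lambda^{\zz}\le 2L\abs{V_{t-}}_1D_t .
\]
This is the only place where the argument differs from BPS, because an $\ell^1$ norm now appears. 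We write $\E[\abs{V_{t-}}_1D_t]=\int_{t_k}^t\E[\abs{V_{t-}}_1\abs{V_u}]\dd u$ and apply Cauchy--Schwarz to obtain $(\E\abs{V_{t-}}_1^2)^{1/2}(\E\abs{V_u}^2)^{1/2}$. For the left limit we take $\E\abs{V_{t-}}_1^2=\E\abs{V_t}_1^2$ for almost every $t$, since there is no fixed-time jump. Integrating $2Ls$ over a window of length $\ell_k$ gives $L\ell_k^2$ times the product of the two suprema. Summing with $\sum_k\ell_k^2\le\tau T$ yields \eqref{eq:zz-thinning-general}.

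\emph{Specializations.} If $V_0\sim N(0,I_d)$, then \eqref{eq:velocity-second-moment} and \eqref{eq:zz-velocity-moments} give the product bound $(d^2)^{1/2}d^{1/2}=d^{3/2}$. Dividing by $d$ gives \eqref{eq:zz-thinning-equivalent}.

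For the cold start with $\gamma^{\zz}=\sqrt L$, the requirement $T\ge\gamma^{\zz}/(4L)=L^{-1/2}/4$ is implied by $T\ge L^{-1/2}$. Lemma~\ref{lem:zz-count} then applies and gives $B^{\zz}(T)/d\le2\sqrt L\,T$. With $\tau=L^{-1/2}d^{-1/4}$ we have $T/\tau=\sqrt L d^{1/4}T$ and $L\sqrt d\,\tau T=\sqrt L d^{1/4}T$. Using $1\le\sqrt L T\le\sqrt L d^{1/4}T$ and $2\sqrt L T\le2\sqrt Ld^{1/4}T$, the total is at most $5\sqrt L d^{1/4}T$, which is \eqref{eq:zz-thinning-cold}.

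The main obstacle, mild as it is, is the rejection bound: one must pair the $\ell^1$ norm from the coordinate envelope with the $\ell^2$ distance $D_t$ through Cauchy--Schwarz, instead of bounding $\abs{V}_1\le\sqrt d\abs V$ pathwise. The naive pathwise bound gives the same $d^{3/2}$ here, so either route works.
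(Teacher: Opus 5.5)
Your proposal is correct and follows the paper's proof essentially step by step. It uses the same nonexplosion bound via $V_T^{\max}$ and the same decomposition of $Q_\partial(T)$ into $d(T/\tau+1)$ anchor queries, accepted flips $B^{\zz}(T)$, and rejections bounded by $2L\abs{V_{t-}}_1D_t$ via Cauchy--Schwarz and $\sum_k\ell_k^2\le\tau T$, followed by the same cold-start arithmetic (your remarks on the left limit and on the pathwise alternative $\abs{V}_1\le\sqrt d\,\abs V$ are valid but not needed).
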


\begin{proof}
The calculation preceding \eqref{eq:zz-envelope} shows that every coordinate
envelope dominates its flip rate throughout each window.
Let us show that the proposal process is nonexplosive.  Since coordinate flips
preserve the speed and only refreshments can change it, for $t\le T$,
\[
        \abs{V_{t-}}\le V_T^{\max},
        \qquad
        D_t\le T V_T^{\max},
        \qquad
        \abs{X_{t_k}-x_\star}
        \le\abs{X_0-x_\star}+T V_T^{\max}.
\]
Moreover, $\abs{V_{t-}}_1\le\sqrt d\,V_T^{\max}$.  Using
$\abs{\nabla U(x)}\le L\abs{x-x_\star}$, we therefore obtain
\[
        \abs{G_k}
        =\abs{\nabla U(X_{t_k})}
        \le L\bigl(\abs{X_0-x_\star}+T V_T^{\max}\bigr).
\]
Substituting these bounds into \eqref{eq:zz-envelope} yields
\[
\begin{aligned}
        \bar\Lambda_t
        &\le\abs{V_{t-}}\abs{G_k}
             +L\abs{V_{t-}}_1D_t\\
        &\le L V_T^{\max}
             \bigl(\abs{X_0-x_\star}+T V_T^{\max}\bigr)
             +\sqrt d\,LT(V_T^{\max})^2<\infty.
\end{aligned}
\]
Thus the total proposal intensity is bounded on $[0,T]$, which proves
nonexplosion, and thus the simulation is well-defined and exact.

We next count its coordinate-partial queries.  There are at most
$T/\tau+1$ nonempty windows and hence at most
$d(T/\tau+1)$ coordinate-partial anchor queries.  Since every proposal
requires one further coordinate-partial query, we have
\[
\begin{aligned}
        Q_\partial(T)
        &\le d\left(\frac{T}{\tau}+1\right)
             +\int_0^T\E\bar\Lambda_t\dd t\\
        &=d\left(\frac{T}{\tau}+1\right)+B^{\zz}(T)
          +\int_0^T\E\!\left[
            \bar\Lambda_t-\Lambda^{\zz}(X_{t-},V_{t-})
          \right]\dd t.
\end{aligned}
\]
The last integral is the expected number of rejected coordinate proposals.
To control this term, we compare the anchored part of each coordinate envelope
with its true flip rate.  Since $\pos{\cdot}$ is $1$-Lipschitz,
\[
\begin{aligned}
        0\le{}
        \bar\Lambda_t-\Lambda^{\zz}(X_{t-},V_{t-})
        &=\sum_{i=1}^d\left(
          \pos{V_{i,t-}G_{k,i}}
          -\pos{V_{i,t-}\partial_iU(X_{t-})}
          +L\abs{V_{i,t-}}D_t
        \right)\\
        &\le\sum_{i=1}^d\abs{V_{i,t-}}\,
          \abs{G_{k,i}-\partial_iU(X_{t-})}
          +L\abs{V_{t-}}_1D_t\\
        &\le2L\abs{V_{t-}}_1D_t.
\end{aligned}
\]
The last inequality uses
$\abs{G_{k,i}-\partial_iU(X_{t-})}
\le\abs{G_k-\nabla U(X_{t-})}\le L D_t$.
For $t\in[t_k,t_{k+1})$, write $s=t-t_k$.  Since
$D_t=\int_{t_k}^t\abs{V_u}\dd u$,
\[
\begin{aligned}
        \E[\abs{V_{t-}}_1D_t]
        &=\int_{t_k}^t
          \E[\abs{V_{t-}}_1\abs{V_u}]\dd u\\
        &\le\int_{t_k}^t
          \sqrt{\E\abs{V_{t-}}_1^2\,\E\abs{V_u}^2}\dd u\\
        &\le s
          \left(\sup_{0\le u\le T}\E\abs{V_u}_1^2\right)^{1/2}
          \left(\sup_{0\le u\le T}\E\abs{V_u}^2\right)^{1/2}.
\end{aligned}
\]
If the $k$th nonempty window has length $\ell_k\le\tau$, its contribution to
the rejected-proposal count is therefore at most
\[
\begin{aligned}
        \int_{t_k}^{t_k+\ell_k}
        \E\!\left[
          \bar\Lambda_t-\Lambda^{\zz}(X_{t-},V_{t-})
        \right]\dd t
        &\le2L
          \left(\sup_{0\le u\le T}\E\abs{V_u}_1^2\right)^{1/2}
          \left(\sup_{0\le u\le T}\E\abs{V_u}^2\right)^{1/2}
          \int_0^{\ell_k}s\dd s\\
        &=L\ell_k^2
          \left(\sup_{0\le u\le T}\E\abs{V_u}_1^2\right)^{1/2}
          \left(\sup_{0\le u\le T}\E\abs{V_u}^2\right)^{1/2}.
\end{aligned}
\]
Summing over the windows and using
$\sum_k\ell_k^2\le\tau\sum_k\ell_k=\tau T$ bounds the rejection term by
\[
        L\tau T
        \left(\sup_{0\le u\le T}\E\abs{V_u}_1^2\right)^{1/2}
        \left(\sup_{0\le u\le T}\E\abs{V_u}^2\right)^{1/2}.
\]
Substitution into the query decomposition proves
\eqref{eq:zz-thinning-general}.

If $V_0\sim N(0,I_d)$, then
\eqref{eq:velocity-second-moment} and \eqref{eq:zz-velocity-moments} give
\[
\left(\sup_{0\le t\le T}\E\abs{V_t}_1^2\right)^{1/2}
\left(\sup_{0\le t\le T}\E\abs{V_t}^2\right)^{1/2}
\le d\sqrt d.
\]
Combined with \eqref{eq:zz-thinning-general} now gives
\eqref{eq:zz-thinning-equivalent} as $Q_{\rm eq}(T) = \frac{1}{d} Q_\partial(T)$.

For the cold start, $\gamma^{\zz}=\sqrt L$ and $T\ge L^{-1/2}$ imply
$T\ge\gamma^{\zz}/(4L)$.  Lemma~\ref{lem:zz-count} therefore gives
$B^{\zz}(T)/d\le2\sqrt L\,T$.  With
$\tau=L^{-1/2}d^{-1/4}$, \eqref{eq:zz-thinning-equivalent} becomes
\[
\begin{aligned}
        Q_{\rm eq}(T)
        &\le\sqrt L\,d^{1/4}T+1
             +2\sqrt L\,T
             +\sqrt L\,d^{1/4}T\\
        &\le4\sqrt L\,d^{1/4}T+1
        \le5\sqrt L\,d^{1/4}T.
\end{aligned}
\]
The last inequality uses
$\sqrt L\,d^{1/4}T\ge1$.
This proves 
\eqref{eq:zz-thinning-cold}.
\end{proof}

\begin{proof}[Proof of Theorem~\ref{thm:zz-complexity}]
The cold start satisfies the initial-state assumption in
Proposition~\ref{prop:zz-thinning}, and \eqref{eq:zz-horizon} includes the
lower bound required for \eqref{eq:zz-thinning-cold}.  The proposition
therefore shows that Algorithm~\ref{alg:zz} is well defined on
$[0,\widehat T_\varepsilon^{\zz}]$, simulates Zigzag exactly, and satisfies
\eqref{eq:zz-thinning-cold}.

The choice of $\widehat T_\varepsilon^{\zz}$ and
Theorem~\ref{thm:LW-ZZ} yield
\[
\begin{aligned}
&\chi^2\!\left(
  \Law(X_{\widehat T_\varepsilon^{\zz}},
       V_{\widehat T_\varepsilon^{\zz}})
  \,\middle\Vert\,\rho_\infty\right)\\
&\quad\le
K^{\zz}
\exp\!\left(
  -\frac{m}{K^{\zz}\sqrt L}\,
   \widehat T_\varepsilon^{\zz}
\right)
\chi^2(\rho_0\Vert\rho_\infty)
\le4\varepsilon^2.
\end{aligned}
\]
This gives the total-variation error $\varepsilon$ by Cauchy–Schwarz inequality.

By \eqref{eq:cold-start-chi2},
\[
\begin{aligned}
\log\!\left(
  1+\frac{K^{\zz}\chi^2(\rho_0\Vert\rho_\infty)}
           {4\varepsilon^2}
\right)
&\le
\log\!\left(
  1+\frac{K^{\zz}\kappa^{d/2}}{4\varepsilon^2}
\right)\\
&\le C\left(d\log\kappa+\log\frac1\varepsilon\right).
\end{aligned}
\]
The other entry in \eqref{eq:zz-horizon} is also bounded by
$\sqrt L/m$, since $L^{-1/2}\le\sqrt L/m$.  Therefore, absorbing some
constants,
\[
        \widehat T_\varepsilon^{\zz}
        =O\!\left(\frac{\sqrt L}{m}
          \left(d\log\kappa+\log\frac1\varepsilon\right)\right).
\]
Since $\sqrt L(\sqrt L/m)=\kappa$, the query estimates become
\[
\begin{aligned}
        Q_\partial(\widehat T_\varepsilon^{\zz})
        &\le5\sqrt L\,d^{5/4}\widehat T_\varepsilon^{\zz}
        =O\!\left(
          \kappa d^{5/4}
          \left(d\log\kappa+\log\frac1\varepsilon\right)\right),\\
        Q_{\rm eq}(\widehat T_\varepsilon^{\zz})
        &\le5\sqrt L\,d^{1/4}\widehat T_\varepsilon^{\zz}
        =O\!\left(
          \kappa d^{1/4}
          \left(d\log\kappa+\log\frac1\varepsilon\right)\right).
\end{aligned}
\]
This proves \eqref{eq:zz-cold-complexity}.
\end{proof}

\bibliographystyle{alpha}
\bibliography{references}

\end{document}